\newcommand{\CM}{Cohen-Macaulay}
\newcommand{\wrt}{with respect to}
\newcommand{\n}{\mathfrak{n} }
\newcommand{\m}{\mathfrak{m} }
\newcommand{\rt}{\rightarrow}
\newcommand{\depth}{\operatorname{depth}}
\theoremstyle{plain}
\newtheorem{thm}{Theorem}
\newtheorem{theorem}{Theorem}[section]
\newtheorem{corollary}[theorem]{Corollary}
\newtheorem{lemma}[theorem]{Lemma}
\newtheorem{proposition}[theorem]{Proposition}
\theoremstyle{definition}
\newtheorem{definition}[theorem]{Definition}
\newtheorem{remark}[theorem]{Remark}
\newtheorem{example}[theorem]{Example}
\theoremstyle{remark}
\begin{document}

\title[Relative Hilbert co-efficients ]{ Relative Hilbert co-efficients  }
\author{Amir Mafi, Tony~J.~Puthenpurakal, Rakesh B. T. Reddy and  Hero Saremi}
\date{\today}

\address{Department of Mathematics, University of Kurdistan, P.O. Box: 416, Sanandaj,
Iran.} \email{a\_mafi@ipm.ir}

\address{Department of Mathematics, IIT Bombay, Powai, Mumbai 400 076}
\email{tputhen@math.iitb.ac.in} \email{raki.hcu@gmail.com}

\address{Department of Mathematics, Sanandaj Branch  Islamic Azad University, Sanandaj, Iran.}
\email{hero.saremi@gmail.com}

\subjclass{Primary 13A30; Secondary 13D40, 13D07}
\keywords{associated graded rings, Cohen-Macaulay local rings, reductions, integral closure, normal ideals}
 \begin{abstract}
 Let $(A,\m)$ be a \CM \ local ring of dimension $d$ and let $I \subseteq J$ be two $\m$-primary ideals with $I$ a reduction of $J$. For $i = 0,\ldots,d$ let $e_i^J(A)$ ($e_i^I(A)$) be the $i^{th}$ Hilbert coefficient of $J$ ($I$) respectively. We call the number $c_i(I,J) = e_i^J(A) - e_i^I(A)$ the $i^{th}$ relative Hilbert coefficient of $J$ \wrt \ $I$. If $G_I(A)$ is \CM \ then $c_i(I,J)$ satisfy various constraints. We also show that vanishing of some $c_i(I,J)$ has strong implications on $\depth G_{J^n}(A)$ for $n \gg 0$.
\end{abstract}
 \maketitle
\section*{Introduction}
Let $(A, \m)$ be a Cohen-Macaulay local ring of dimension $d$ and let $J$ be an $\m -$primary ideal. The Hilbert-Samuel 
function of $A$ with respect to $J$ is $H_J(n)=\lambda(A/J^{n+1})$, (here $\lambda(-)$ denotes the length). It is well known that
$H_J$ is of polynomial type $i.e.$ there exists $P_J(X) \in \mathbb{Q}[X]$ such that $H_J(n)=P_J(n)$ for all $n\gg 0$. We write
\[ P_J(X)=e^J_0(A)\binom{X+d}{d}-e^J_1(A)\binom{X+d-1}{d-1}
+\cdots+(-1)^d e^J_d(A).\]
Then the numbers $e^J_i(A)$ for $i=0, 1, \cdots, d$ are the Hilbert coefficients of $A$ with respect to $J$. The number 
$e^J_0(A)$ is called the multiplicity of $A$ with respect to $J$.

Now assume for  convenience $A$ has infinite residue field. Then $J$ has a minimal reduction $\mathfrak{q}$ generated by a system
of parameters of $A$. Let $Gr_J(A)=\bigoplus_{n\geq 0}J^n/J^{n+1}$ be the associated graded ring of $A$ with respect to $J$.
There has been a lot of research regarding properties  of $J$ and $\mathfrak{q}$ and the depth properties of $Gr_J(A)$. For 
example if $J^2=\mathfrak{q}J$ then we say $J$ has minimal multiplicity and in this case $Gr_J(A)$ is Cohen-Macaulay (see, \cite[2.7]{VV}).

In the context of this paper we consider minimal reduction to be an absolute reduction of $J$. The main new idea of this 
paper is that it is convenient to consider reduction $I$ of $J$ not necessarily minimal but having the crucial property that
$Gr_I(A)$ is Cohen-Macaulay. We note that if $\mathfrak{q}$ is a minimal reduction of $J$ then it is generated by system of
parameters of $A$ and so necessarily $Gr_{\mathfrak{q}}(A)$ is Cohen-Macaulay.

As $I$ is a reduction of $J$ then necessarily $e^I_0(A)=e^J_0(A)$. Let \[c_i(I,J)= e^J_i(A)-e^I_i(A) \quad \text{for} \ i\geq 1.\] Then we say
$c_i(I,J)$ to be the $i^{th}$ \textit{relative Hilbert coefficient} of $J$ with respect to $I$. We note that if $\mathfrak{q}$ is a 
minimal reduction of $J$ then $e^{\mathfrak{q}}_i(A)=0$ for $i\geq 1$ and so $c_i(\mathfrak{q},J)=e^J_i(A)$ for $i\geq 1$.

Let us recall the classical Northcott's inequality \[e^J_1(A) \geq e^J_0(A)-\lambda(A/J)\]
(see, \cite{No}). But $e^J_0(A)=\lambda(A/\mathfrak{q})$
where $\mathfrak{q}$ is a minimal reduction of $J$. So Northcott's inequality can be rewritten as 
\[e^J_1(A)\geq \lambda(J/\mathfrak{q}).\]
Furthermore if equality hods then by Huneke (see, \cite{Hminimal}) and Ooishi (see, \cite{Ooishi}) $Gr_J(A)$ is Cohen-Macaulay.

Our first result which easily follows from a deep result of Huckaba and Marley \cite[Theorem 4.7]{HM} is the following.
\begin{thm}\label{thrm1}
 Let $(A,\m)$  be a Cohen-Macaulay local ring and let $I\subset J$ be $\m-$ primary ideals with $I$ a reduction of $J$. Assume
 $Gr_I(A)$ is Cohen-Macaulay. Then
 \[c_1(I, J) \geq \lambda(J/I).\]
 If equality holds then $Gr_J(A)$ is also Cohen-Macaulay.
\end{thm}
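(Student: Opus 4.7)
The plan is to fix a common minimal reduction $\q$ of $I$ and $J$ --- any minimal reduction of $I$ works, since $I$ is itself a reduction of $J$ and $\q$ is automatically a parameter ideal of $A$. Apply Huckaba--Marley [HM, Theorem 4.7] to each ideal: for any $\m$-primary $K$ with minimal reduction $\q$,
\[ e^K_1(A) \;\geq\; \sum_{n \geq 0} \lambda(K^{n+1}/\q K^n), \]
with equality iff $\depth Gr_K(A) \geq d-1$. Since $Gr_I(A)$ is Cohen--Macaulay, the inequality for $I$ is already an equality. Subtracting yields
\[ c_1(I,J) \;\geq\; \sum_{n \geq 0} \bigl[\lambda(J^{n+1}/\q J^n) - \lambda(I^{n+1}/\q I^n)\bigr]. \]
The $n = 0$ term equals $\lambda(J/\q) - \lambda(I/\q) = \lambda(J/I)$, so the first assertion reduces to a term-by-term nonnegativity estimate.

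The main lemma is $\lambda(J^{n+1}/\q J^n) \geq \lambda(I^{n+1}/\q I^n)$ for $n \geq 1$. Rewriting
\[ \lambda(J^{n+1}/\q J^n) - \lambda(I^{n+1}/\q I^n) \;=\; \lambda(J^{n+1}/I^{n+1}) - \lambda(\q J^n/\q I^n), \]
this is equivalent to showing that the natural inclusion-induced map $\q J^n/\q I^n \to J^{n+1}/I^{n+1}$ is injective. Its kernel is $(\q J^n \cap I^{n+1})/\q I^n$, but $\q J^n \subseteq \q$, and the Valabrega--Valla characterization applied to the Cohen--Macaulay ring $Gr_I(A)$ gives $\q \cap I^{n+1} = \q I^n$, so $\q J^n \cap I^{n+1} \subseteq \q I^n$ and the kernel vanishes. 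This is the crux: the full Cohen--Macaulayness hypothesis on $Gr_I(A)$ enters through this single containment.

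For the equality statement, $c_1(I,J) = \lambda(J/I)$ forces simultaneously equality in Huckaba--Marley for $J$ --- so $\depth Gr_J(A) \geq d-1$ --- and surjectivity (hence bijectivity) of the injection $\q J^n/\q I^n \hookrightarrow J^{n+1}/I^{n+1}$ for every $n \geq 1$, i.e.\ $J^{n+1} = \q J^n + I^{n+1}$. Intersecting with $\q$ and invoking Valabrega--Valla for $I$ once more,
\[ J^{n+1} \cap \q \;=\; (\q J^n + I^{n+1}) \cap \q \;=\; \q J^n + (I^{n+1} \cap \q) \;=\; \q J^n + \q I^n \;=\; \q J^n, \]
which is the Valabrega--Valla criterion for $J$ itself (the $n = 0$ case is trivial). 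This upgrades $\depth Gr_J(A) \geq d-1$ to Cohen--Macaulayness. I expect the only real obstacle to be pinpointing the containment $\q J^n \cap I^{n+1} \subseteq \q I^n$; once this is identified the inequality is immediate and the equality case amounts to reading Valabrega--Valla backwards.
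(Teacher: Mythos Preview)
Your argument is correct and is precisely the Huckaba--Marley route that the paper itself mentions (and explicitly declines to write out). The paper's own proof is genuinely different: it reduces to dimension one via Sally descent, introduces the $\mathcal{R}(I,A)$-module $W=\bigoplus_{n\geq 0}J^{n+1}/I^{n+1}$ sitting in the short exact sequence $0\to W\to L^I(A)\to L^J(A)\to 0$, and uses that $u=xt$ is $L^I(A)$-regular (equivalently, $x^*$ is $Gr_I(A)$-regular) to deduce that the multiplication maps $W_i\to W_{i+1}$ are injective. This gives $\lambda(W_0)\leq\lambda(W_1)\leq\cdots\leq e_0(W)=\delta e_1^J-e_1^I$, whence the inequality; equality forces all these maps to be isomorphisms, and the Snake Lemma then makes $u$ regular on $L^J(A)$.

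Your approach is shorter and more elementary---Valabrega--Valla plus Huckaba--Marley, with no $L$-module machinery---and in fact your equality argument proves slightly more than needed: once you obtain $J^{n+1}\cap\q=\q J^n$ for all $n$, Valabrega--Valla gives Cohen--Macaulayness of $Gr_J(A)$ directly, so the depth $\geq d-1$ input from Huckaba--Marley becomes redundant. The paper's approach, by contrast, is designed to scale: the same $L$-module and local-cohomology template is reused verbatim in the later results on $c_2$ and $c_3$, where no analogue of the Huckaba--Marley shortcut is available. One small point you leave implicit: to invoke Valabrega--Valla you need the generators of $\q$ to have initial forms of degree one in $Gr_I(A)$ forming a regular sequence, which is arranged by taking $\q$ generated by an $I$-superficial sequence (and passing to an infinite residue field if necessary).
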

We give a different proof of Theorem \ref{thrm1}. Although it's longer than the proof using Huckaba and Marley result, it has
the advantage that it's techniques can be generalized to prove other results.
 
 In \cite{NARITA} Narita proved that $e^J_2(A)\geq 0$. Furthermore if $\dim A= 2$ then $e_2^J(A)=0$ if and only if reduction
 number of $J^n$ is $1$ for $n\gg 0$. In particular $Gr_{J^n}(A)$ is Cohen-Macaulay for $n\gg 0$.
 
 Our genelization of Narita's result is:
 \begin{thm}\label{thrm2}
  Let $(A,\m)$ be a Cohen-Macaulay local ring and let $I\subset J$ be $\m-$ primary ideals with $I$ a reduction of $J$. Assume
 $Gr_I(A)$ is Cohen-Macaulay. Then
 \[c_2(I, J) \geq 0.\]
 If $\dim A =2$ and $c_2(I, J)=0$ then $Gr_{J^n}(A)$ is Cohen-Macaulay for $n\gg 0.$
 \end{thm}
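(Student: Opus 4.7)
The plan is to mirror the approach used for Theorem~\ref{thrm1}, now applied to the second coefficient. First I would reduce to the case $d=2$ by a standard superficial-element descent. Using that $Gr_I(A)$ is Cohen--Macaulay, choose $x_1, \ldots, x_{d-2} \in I$ superficial for both $I$ and $J$ whose initial forms $x_1^*, \ldots, x_{d-2}^*$ are a regular sequence on $Gr_I(A)$. Passing to $\bar A = A/(x_1,\ldots,x_{d-2})$ leaves $Gr_{\bar I}(\bar A)$ Cohen--Macaulay, keeps $\bar I$ a reduction of $\bar J$, and preserves the Hilbert coefficients $e_0, e_1, e_2$ of both ideals at every step (at each descent the current dimension stays at least $3$ until the last step, so $e_2$ survives). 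Hence $c_2(I,J) = c_2(\bar I, \bar J)$, and it suffices to handle $d=2$.

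In dimension two I would exploit the short exact sequence of graded $\R(I) = A[It]$--modules
\[
0 \longrightarrow K \longrightarrow L^I(A) \longrightarrow L^J(A) \longrightarrow 0,
\]
where $L^I(A) = \bigoplus_{n\geq 0} A/I^{n+1}$, $L^J(A) = \bigoplus_{n\geq 0} A/J^{n+1}$, and $K = \bigoplus_{n\geq 0} J^{n+1}/I^{n+1}$ inherits its grading from $L^I(A)$. A direct calculation with Hilbert polynomials shows $\lambda(K_n) = c_1(I,J)(n+1) - c_2(I,J)$ for $n \gg 0$, so $e_0(K) = c_1(I,J)$ and $e_1(K) = c_2(I,J)$. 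The Cohen--Macaulayness of $Gr_I(A)$ implies, via the standard $0 \to Gr_I(A) \to L^I(A) \to L^I(A)(-1) \to 0$ sequence, that a suitable superficial element of $I$ acts as a nonzerodivisor on $L^I(A)$ and hence on the submodule $K$. Applying a Northcott-type inequality to the resulting one-dimensional quotient would give $e_1(K) = c_2(I,J) \geq 0$, realised as a sum of nonnegative lengths in the spirit of the alternative proof of Theorem~\ref{thrm1}.

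For the equality case $c_2(I,J) = 0$, vanishing of $e_1(K)$ forces $K$ to be very rigid; concretely, the Sally-type module $\bigoplus_{n\geq 0} J^{n+1}/I J^n$ should vanish beyond a bounded degree. Translating this into reduction-number language, one obtains for sufficiently large $N$ that $(J^N)^2 = \q_N J^N$ for some minimal reduction $\q_N$ of $J^N$, i.e.\ $J^N$ has minimal multiplicity. The Valabrega--Valla criterion (see \cite[2.7]{VV}) then gives that $Gr_{J^N}(A)$ is Cohen--Macaulay for all such $N$.

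The main obstacle is the middle step: our hypothesis is Cohen--Macaulayness of $Gr_I(A)$, whereas the classical Huckaba--Marley-type formulas require a depth assumption on $Gr_J(A)$ that we do not have. The heart of the argument is therefore a \emph{relative} Huckaba--Marley formula for $K$ (or for the Sally-type module), transferring depth information from $Gr_I(A)$ to $K$ through the short exact sequence above. Once this relative identity is in place, both the inequality $c_2(I,J) \geq 0$ and the characterization of equality fall out by reading off which length terms have to be forced to zero.
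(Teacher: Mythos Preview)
Your reduction to $d=2$ and the identification $e_1(K)=c_2(I,J)$ for $K=\bigoplus_{n\ge 0}J^{n+1}/I^{n+1}$ are both fine. The gap is exactly where you flag the ``main obstacle'': from the exact sequence $0\to K\to L^I(A)\to L^J(A)\to 0$ and the vanishing $H^0(L^I(A))=H^1(L^I(A))=0$ you only get $H^0(K)=0$ and $H^1(K)\cong H^0(L^J(A))$. Since we have no depth assumption on $Gr_J(A)$, $H^0(L^J(A))$ need not vanish, so $K$ has depth $\ge 1$ but is \emph{not} Cohen--Macaulay in general. Modding out one regular element leaves a one--dimensional module that is not Cohen--Macaulay, and for such a module $e_1$ can be negative; the ``sum of nonnegative lengths'' mechanism from Theorem~\ref{thrm1} needs the \emph{second} superficial element to act injectively on $K/uK$, which is precisely $\depth K\ge 2$. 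Your proposed ``relative Huckaba--Marley formula'' does not supply this; as written it is a hope rather than an argument.

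The paper's fix is to pass to the Ratliff--Rush filtration: set $\widetilde W=\bigoplus_{n\ge 0}\widetilde{J^{n+1}}/I^{n+1}$ and use $0\to\widetilde W\to L^I(A)\to\widetilde L^J(A)\to 0$. The point is that $H^0(\widetilde{Gr}_J(A))=0$ \emph{always}, so $H^0(\widetilde L^J(A))=0$, and now the long exact sequence forces $H^0(\widetilde W)=H^1(\widetilde W)=0$; thus $\widetilde W$ is genuinely Cohen--Macaulay of dimension $2$ and its $h$--polynomial has nonnegative coefficients, giving $e_1(\widetilde W)\ge 0$. Since $\widetilde{J^n}=J^n$ for $n\gg 0$, $\widetilde W$ and $K$ differ in only finitely many graded pieces, so $e_1(K)=e_1(\widetilde W)\ge 0$. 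In the equality case $e_1(\widetilde W)=0$ forces the $h$--polynomial of $\widetilde W$ to be a constant; a Snake--Lemma argument (as in the proof of Theorem~\ref{thrm1}, now run with two superficial elements) then shows $\widetilde{Gr}_J(A)$ is Cohen--Macaulay, and this is what yields $Gr_{J^n}(A)$ Cohen--Macaulay for $n\gg 0$ --- not a direct reduction--number $1$ statement via Valabrega--Valla as you suggest.
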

If we assume $J$ is integrally closed then we have the following result:
\begin{thm}\label{thrm3}
 Let $(A,\m)$ be a Cohen-Macaulay local ring of dimension $2$. Let $I\subset J$ be $\m-$ primary ideals with $I$ a reduction
 of $J$. Assume $J$ is integrally closed and  $Gr_I(A)$ is Cohen-Macaulay. If
 $c_1(I, J) = \lambda(J/I) +1$
 then  \[2\lambda(J/I)\leq \lambda(\widetilde{J^2}/I^2) \leq 2 \lambda(J/I)+1.\] If $\lambda(\widetilde{J^2}/I^2) = 2 \lambda(J/I)+1$
 then $Gr_{J^n}(A)$ is Cohen-Macaulay for all $n \gg 0$.
 \end{thm}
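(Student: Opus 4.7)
I will refine the techniques underpinning Theorems~\ref{thrm1} and~\ref{thrm2}, exploiting that $J$ is integrally closed to extract $\lambda(\widetilde{J^2}/I^2)$ precisely. Fix a minimal reduction $\mathfrak{q}=(x,y)\subseteq I$ of $J$ with $x,y$ a superficial sequence. The starting point is the Huckaba--Marley formula in dimension two,
\[ e_1^K(A)=\sum_{n\geq 0}\lambda\bigl(\widetilde{K^{n+1}}/\mathfrak{q}\widetilde{K^n}\bigr), \]
valid for any $\m$-primary ideal $K$. Applied to $K=I$ (with $\widetilde{I^n}=I^n$, since $G_I(A)$ is \CM) and to $K=J$ (with $\widetilde{J}=J$, since $J$ is integrally closed), subtracting gives
\[ c_1(I,J)=\lambda(J/I)+\sum_{n\geq 1}\delta_n,\qquad \delta_n:=\lambda\bigl(\widetilde{J^{n+1}}/\mathfrak{q}\widetilde{J^n}\bigr)-\lambda\bigl(I^{n+1}/\mathfrak{q}I^n\bigr). \]
Each $\delta_n\geq 0$, because $\mathfrak{q}\widetilde{J^n}\subseteq\mathfrak{q}$ combined with the Valabrega--Valla equality $I^{n+1}\cap\mathfrak{q}=\mathfrak{q}I^n$ yields an injection $I^{n+1}/\mathfrak{q}I^n\hookrightarrow\widetilde{J^{n+1}}/\mathfrak{q}\widetilde{J^n}$; this is essentially the content of Theorem~\ref{thrm1}. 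The hypothesis $c_1(I,J)=\lambda(J/I)+1$ then forces $\sum_{n\geq 1}\delta_n=1$, so exactly one $\delta_n$ equals $1$ and the rest vanish.

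Next I will convert this constraint into a formula for $\lambda(\widetilde{J^2}/I^2)$. A Koszul-type computation, using that $x,y$ is a regular sequence on $A$ and that $x,y\in I$, shows that the multiplication map $(J/I)^{\oplus 2}\to\mathfrak{q}J/\mathfrak{q}I$, $(\overline{a},\overline{b})\mapsto xa+yb\pmod{\mathfrak{q}I}$, is an isomorphism, so $\lambda(\mathfrak{q}J/\mathfrak{q}I)=2\lambda(J/I)$. Combined with the identity $\mathfrak{q}J\cap I^2=\mathfrak{q}\cap I^2=\mathfrak{q}I$ (from $\mathfrak{q}J\subseteq\mathfrak{q}$ and Valabrega--Valla) and the filtration $I^2\subseteq I^2+\mathfrak{q}J\subseteq\widetilde{J^2}$, a length count yields
\[ \lambda(\widetilde{J^2}/I^2)=\lambda\bigl(\widetilde{J^2}/(I^2+\mathfrak{q}J)\bigr)+\lambda(\mathfrak{q}J/\mathfrak{q}I)=\delta_1+2\lambda(J/I), \]
after identifying the first summand with $\lambda(\widetilde{J^2}/\mathfrak{q}J)-\lambda(I^2/\mathfrak{q}I)=\delta_1$. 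Since $\delta_1\in\{0,1\}$, the bounds $2\lambda(J/I)\leq\lambda(\widetilde{J^2}/I^2)\leq 2\lambda(J/I)+1$ are immediate.

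In the equality case $\lambda(\widetilde{J^2}/I^2)=2\lambda(J/I)+1$, the formula forces $\delta_1=1$, and hence $\delta_n=0$ for every $n\geq 2$. By the injectivity argument above, each such vanishing yields $\widetilde{J^{n+1}}=\mathfrak{q}\widetilde{J^n}+I^{n+1}$. For $n\gg 0$ we have $\widetilde{J^n}=J^n$ (Ratliff--Rush stabilization) and $I^{n+1}=\mathfrak{q}I^n$ (finite reduction number of $I$, since $G_I(A)$ is \CM), so $J^{n+1}=\mathfrak{q}J^n$ for all sufficiently large $n$; iterating gives $J^{2n}=\mathfrak{q}^nJ^n$, so that $\mathfrak{q}^n$ is a minimal reduction of $J^n$ with reduction number at most $1$, and in a two-dimensional \CM \ local ring this is known to force $G_{J^n}(A)$ to be \CM. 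The principal technical hurdle is the clean length identity $\lambda(\widetilde{J^2}/I^2)=2\lambda(J/I)+\delta_1$, which requires simultaneously invoking the Valabrega--Valla intersection ($G_I$ \CM), the Koszul isomorphism $\mathfrak{q}J/\mathfrak{q}I\cong(J/I)^{\oplus 2}$ (regular sequence $x,y$), and $\widetilde{J}=J$ (integral closedness of $J$); all three hypotheses of the theorem are essential to its derivation.
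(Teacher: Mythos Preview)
Your derivation of the bounds is correct and proceeds by a genuinely different route from the paper. The paper works with the module $\widetilde{W}=\bigoplus_{n\geq 0}\widetilde{J^{n+1}}/I^{n+1}$, shows it is Cohen--Macaulay of dimension~$2$ (via a local-cohomology argument from the exact sequence $0\to\widetilde{W}\to L^I(A)\to\widetilde{L}^J(A)\to 0$), and then reads off the bounds from the nonnegativity of the coefficients of its $h$-polynomial: writing $H_{\widetilde{W}}(z)=\widetilde r(z)/(1-z)^2$ with $\widetilde r(z)=r_0+r_1z+\cdots$, one has $r_0=\lambda(J/I)$, $\lambda(\widetilde{J^2}/I^2)=2r_0+r_1$, and $r_1+\cdots+r_s=1$. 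Your $\delta_1$ is exactly the paper's $r_1$, but you obtain $\delta_1\in\{0,1\}$ via the Huckaba--Marley formula for $e_1$ together with the Koszul isomorphism $(J/I)^{\oplus 2}\cong\mathfrak{q}J/\mathfrak{q}I$. This is a nice elementary alternative that avoids the $L$-module machinery for part~(a).

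However, your argument for the equality case contains a genuine gap. The assertion that ``$\mathfrak{q}^n$ is a minimal reduction of $J^n$ with reduction number at most~$1$'' is false: $\mathfrak{q}^n=(x,y)^n$ requires $n+1$ generators, not $d=2$, so it is not a parameter ideal and the Valabrega--Valla/minimal-multiplicity criterion does not apply. More fundamentally, the conclusion $J^{n+1}=\mathfrak{q}J^n$ for $n\gg 0$ that you extract is \emph{vacuous}: it holds for any $\m$-primary $J$ with minimal reduction $\mathfrak{q}$, simply by definition of reduction, and therefore cannot by itself force $G_{J^n}(A)$ to be Cohen--Macaulay. You have discarded the real content of $\delta_n=0$ for all $n\geq 2$ by passing to $n\gg 0$. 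Notice also that your equality argument never invokes the integral closedness of $J$ beyond $\widetilde{J}=J$; this is a warning sign, since in the paper's proof integral closedness is used in an essential way precisely to handle the degree-one obstruction.

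What the paper does instead is show directly that $\widetilde{Gr}_J(A)$ is Cohen--Macaulay. In your language, one can recover this as follows: from $\delta_n=0$ for $n\geq 2$ one gets $\widetilde{J^{n+1}}=\mathfrak{q}\widetilde{J^n}+I^{n+1}$, hence $\widetilde{J^{n+1}}\cap\mathfrak{q}=\mathfrak{q}\widetilde{J^n}+(I^{n+1}\cap\mathfrak{q})=\mathfrak{q}\widetilde{J^n}$ for $n\geq 2$ (using Valabrega--Valla for $I$). The case $n=0$ is trivial. The delicate case is $n=1$: one must show $\widetilde{J^2}\cap\mathfrak{q}=\mathfrak{q}J$, and this is exactly where the paper uses that (the image of) $J$ is integrally closed, via an argument that $(\widetilde{J^2}:y)\subseteq\widetilde{J}=\overline{J}=J$ after going modulo $x$. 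Once $\widetilde{J^{n+1}}\cap\mathfrak{q}=\mathfrak{q}\widetilde{J^n}$ holds for all $n\geq 0$, the images $x^*,y^*$ form a regular sequence on $\widetilde{Gr}_J(A)$, and Cohen--Macaulayness of $G_{J^n}(A)$ for $n\gg 0$ follows. Your proposal needs this missing $n=1$ step (or an equivalent depth argument on $\widetilde{L}^J(A)$) to close the gap.
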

 
 In Theorem \ref{thrm3},  $\widetilde{J^2}$ denotes the Ratliff-Rush closure of $J^2$ (see,\cite{RR}).
 
 Narita gave an example which shows that $e_3^J(A)$ of an $\m-$primary ideal $J$ can be negative. Recall that an ideal $J$ is 
 said to be normal if $J^n$ is integrally closed for all $n\geq 1$. In \cite{itoh1} Itoh proved that if $\dim A \geq 3$ and $J$ is a normal ideal
 then $e^J_3(A)\geq 0$. We prove:
 \begin{thm}
  Let $(A,\m)$ be a Cohen-Macaulay local ring of dimension $d \geq 3$. Let $I\subset J$ be $\m-$ primary ideals with $I$ a reduction
 of $J$. Assume $J$ is normal and  $Gr_I(A)$ is Cohen-Macaulay. Then
 \[c_3(I, J)\geq 0. \]
 If $d=3$ and $c_3(I, J)=0$ then $Gr_{J^n}(A)$ is Cohen-Macaulay for all $n \gg 0.$
 \end{thm}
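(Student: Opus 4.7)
The plan is to transpose Itoh's proof of $e_3^J(A) \geq 0$ for normal $J$ to the relative setting, using the Cohen-Macaulay property of $Gr_I(A)$ in place of $I$ being a minimal reduction. First I would reduce to $d = 3$: after enlarging to an infinite residue field, choose $x \in \m$ simultaneously superficial for $I$ and $J$; since $Gr_I(A)$ is \CM, the initial form $x^\ast$ is $Gr_I(A)$-regular, so $Gr_{I/(x)}(A/(x))$ is again \CM. For $d \geq 4$, a sufficiently general such $x$ also preserves normality of $J$ in $A/(x)$ (standard Itoh-type lemma), and the relative coefficient $c_3(I, J)$ is unchanged under this quotient. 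Iterate to reach $d = 3$.

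In dimension $3$, fix a minimal reduction $\q = (x_1, x_2, x_3)$ of $I$ generated by a superficial sequence. The \CM\ property of $Gr_I(A)$ gives the Valabrega-Valla identity $I^n \cap \q = \q I^{n-1}$ for all $n \geq 1$, and the closed formula
\[
 e_3^I(A) \;=\; \sum_{n \geq 3}\binom{n-1}{2}\lambda(I^n/\q I^{n-1}).
\]
In parallel, a refinement of Itoh's proof of $e_3^J \geq 0$ for normal $J$ yields
\[
 e_3^J(A) \;\geq\; \sum_{n \geq 3}\binom{n-1}{2}\lambda(J^n/\q J^{n-1}),
\]
the gap being a non-negative correction supported on the local cohomology of $Gr_J(A)$. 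The key comparison is that the natural map $\varphi_n \colon I^n/\q I^{n-1} \xar J^n/\q J^{n-1}$ is injective: its kernel equals $(I^n \cap \q J^{n-1})/\q I^{n-1}$, and since $\q J^{n-1} \sub \q$, Valabrega-Valla forces
\[
 I^n \cap \q J^{n-1} \;\sub\; I^n \cap \q \;=\; \q I^{n-1}.
\]
Thus $\lambda(I^n/\q I^{n-1}) \leq \lambda(J^n/\q J^{n-1})$ for every $n$, and summing against the weights $\binom{n-1}{2}$ gives $c_3(I, J) \geq 0$.

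For the equality case in $d = 3$: if $c_3(I, J) = 0$ then both the term-by-term inequalities and the Itoh correction must collapse, forcing $J^n = I^n + \q J^{n-1}$ for all $n \gg 0$ together with vanishing of the relevant pieces of the local cohomology of $Gr_J(A)$. Combined with $Gr_I(A)$ being \CM, this provides enough reduction-number control on $J^n$ to verify the Valabrega-Valla criterion, yielding $Gr_{J^n}(A)$ \CM\ for $n \gg 0$. The main obstacle is the refined inequality $e_3^J(A) \geq \sum_{n \geq 3} \binom{n-1}{2}\lambda(J^n/\q J^{n-1})$, which is sharper than Itoh's bare $e_3^J \geq 0$ and requires reprising his normal Sally module argument to identify the missing contribution as the length of a specific graded piece of local cohomology.
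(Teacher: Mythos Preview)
Your approach diverges from the paper's and carries a genuine gap at exactly the point you flag. The inequality $e_3^J(A) \geq \sum_{n\geq 3}\binom{n-1}{2}\lambda(J^n/\q J^{n-1})$ is not a known refinement of Itoh, and its direction is suspect: Huckaba's bound runs the other way, $e_1^J \leq \sum_n \lambda(J^n/\q J^{n-1})$ with equality precisely when $\depth Gr_J(A)\geq d-1$, so when $Gr_J(A)$ is not \CM\ these weighted sums tend to \emph{overshoot} the Hilbert coefficients rather than undershoot them. You would have to argue that normality together with the $\binom{n-1}{2}$ weighting reverses this behaviour, and nothing in Itoh's Sally-module analysis delivers such a lower bound. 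Your injectivity of $\varphi_n$ via $I^n\cap\q J^{n-1}\subseteq I^n\cap\q=\q I^{n-1}$ is correct and pleasant, but it is stranded without the other inequality. The equality case you sketch (``$J^n=I^n+\q J^{n-1}$ plus Valabrega--Valla'') is likewise not substantiated.

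The paper sidesteps all of this. Since $J$ is normal it is asymptotically normal, so by Huckaba--Huneke one may replace $I,J$ by high powers $T=I^n$, $K=J^n$ with $\depth Gr_K(A)\geq 2$; the \CM\ property of $Gr_T(A)$ persists because $Gr_I(A)$ is \CM. One then forms the relative module $W=\bigoplus_{m\geq 0}K^{m+1}/T^{m+1}$ in the exact sequence $0\to W\to L^T(A)\to L^K(A)\to 0$ of $\R(T,A)$-modules. Depth-chasing forces $W$ to be \CM\ of dimension $3$, hence $e_2(W)\geq 0$, and the Hilbert-series identity $e_3^K(A)=e_3^T(A)+e_2(W)$ combined with invariance of $e_3$ under powers gives $c_3(I,J)\geq 0$. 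For the equality case $e_2(W)=0$ pins the $h$-polynomial of $W$ to degree $\leq 1$; a Snake-Lemma argument on the sequence modulo two superficial elements then shows the multiplication-by-$w$ kernel on $L^K(A)/(u,v)L^K(A)$ vanishes in every degree (using integral closedness of $K$ modulo $(x,y)$ in degree $1$), whence $\depth Gr_K(A)=3$. No minimal reduction $\q$ of $I$ and no summation formula for $e_3$ is used.
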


In the main body of the paper we consider a more general situation $(A,\m) \rt (B,\n)$ is a finite map with $\dim A = \dim B$, $I$ an $\m$-primary $A$-ideal, $J$ an $\n$-primary ideal with $IB$ a reduction of $J$. We now describe in brief the contents of this paper. In section one we discuss a few prelimary results that we need. In section two we prove Theorem 1. In section three we prove Theorems 2,3. We prove Theorem 4 in section 4. Finally in section 5 we give a few examples which illustrates our results.

\section{Preliminaries}
Throughout this paper we follow the following hypothesis unless otherwise stated.\\
\textbf{Hypothesis}:  Let $(A, \m)$ $\stackrel{\psi}\longrightarrow$ $(B, \n)$ be a local homomorphism of Cohen-Macaulay local rings such that\\
$(1)$ $B$ is  finite as an $A-$module and  $\dim A = \dim B$.\\
$(2)$ $I\subset A$  and 
 $J \subset B$ are ideals such that $\psi(I)B$ is a reduction of $J$.  
 
 \begin{remark}
 $\psi(I)B$ is not necessarily a minimal reduction of $J$.
 \end{remark}
 
\begin{remark}
 Note that $B/\mathfrak{n}$ is a finite extension of $A/\mathfrak{m}.$ Set $\delta = \dim_{A/\mathfrak{m}}B/\mathfrak{n}. $ Then
for any $B-$module $N$ of finite length we have $\lambda_A(N)=\delta \lambda_B(N).$
\end{remark}
 The following result gives a necessary and sufficient condition for $\psi(I)B$ to be a reduction of $J$.
 \begin{lemma}\label{red-finite-Gr}
Let $\psi: (A, \m) \longrightarrow (B, \mathfrak{n})$ be a local homomorphism of Cohen-Macaulay local rings such that
$B$ is a finite $A-$module and $\dim A = \dim B$. Let $I$ ideal in $A$ and let $J$ ideal in $B$ with $\psi(I)\subset J$. Then $Gr_J(B)$ is 
 finitely generated as a $Gr_I(A)-$module if and only if  $\psi(I)B$ is a reduction of $J$.
\end{lemma}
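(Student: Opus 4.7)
The plan is to translate back and forth between finite generation of the graded module $Gr_J(B)$ and a reduction-type equality on powers of $J$. Before starting the two implications, I would record two preliminary observations: the containment $\psi(I) \subseteq J$ induces a graded ring homomorphism $Gr_I(A) \to Gr_J(B)$, and $\psi(I)$ annihilates each piece $J^k/J^{k+1}$, making it naturally an $A/I$-module via $\psi$. Since $B$ is a finite $A$-module, each $J^k/J^{k+1}$ is then finitely generated as an $A/I$-module, i.e.\ as a $(Gr_I(A))_0$-module.

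For the $(\Leftarrow)$ direction, I would assume $\psi(I)B$ is a reduction, so that $J^{k+1} = \psi(I) J^k$ for all $k \geq n$ and some fixed $n$. Passing to the associated graded one gets $J^{k+1}/J^{k+2} = (I/I^2) \cdot (J^k/J^{k+1})$ for $k \geq n$, and a straightforward induction on degree shows that $Gr_J(B)$ is generated as a $Gr_I(A)$-module by $\bigoplus_{r=0}^{n} J^r/J^{r+1}$. By the opening observation each summand is finitely generated over $A/I$, so $Gr_J(B)$ is finitely generated over $Gr_I(A)$.

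For the $(\Rightarrow)$ direction, I would pick $n$ such that $Gr_J(B)$ is generated in degrees $\leq n$ as a $Gr_I(A)$-module. For $k > n$ this gives
\[
J^k/J^{k+1} = \sum_{r=0}^{n} (I^{k-r}/I^{k-r+1})(J^r/J^{r+1}),
\]
which lifts to $J^k = \sum_{r=0}^{n} \psi(I)^{k-r} J^r + J^{k+1}$. Because $\psi(I) \subseteq J$ and $r \leq n$, each summand satisfies $\psi(I)^{k-r} J^r \subseteq \psi(I)^{k-n} J^n$, so
\[
J^k = \psi(I)^{k-n} J^n + J \cdot J^k.
\]
Nakayama's lemma, applied to the finitely generated $B$-module $J^k/\psi(I)^{k-n} J^n$ (using $J \subseteq \n$), then forces $J^k = \psi(I)^{k-n} J^n$; specializing to $k = n+1$ exhibits $\psi(I)B$ as a reduction of $J$.

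The main obstacle I expect is in the $(\Rightarrow)$ direction: one has to unwind the graded finite-generation condition into an equation on actual powers of $J$ inside $B$, and then absorb the various $\psi(I)^{k-r} J^r$ into the single summand $\psi(I)^{k-n} J^n$ before Nakayama can deliver the reduction equation.
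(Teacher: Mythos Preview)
Your proof is correct and follows essentially the same approach as the paper. The only presentational difference in the $(\Rightarrow)$ direction is that the paper uses the slightly slicker observation that finite generation over a degree-one generated ring gives $Gr_I(A)_1\cdot Gr_J(B)_n = Gr_J(B)_{n+1}$ for $n\gg 0$, which immediately yields $J^{n+1}=\psi(I)J^n+J^{n+2}$ and then Nakayama; you instead sum over all generating degrees $r\le n$ and absorb into $\psi(I)^{k-n}J^n$ before applying Nakayama, which is the same argument with one extra bookkeeping step.
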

\begin{proof}
 Suppose $\psi(I)B$ is a reduction of $J$. Let $c \geq 1$ be such that $\psi(I)J^n = J^{n+1}$ for all $n \geq c$. Then 
 $J^{n+1}/J^{n+2}= \psi(I)(J^n/J^{n+1})$ for all $n \geq c$. Therefore $Gr_J(B)$ is a finite $Gr_I(A)-$module.
 
 Conversely suppose that $Gr_J(B)$ is a finite $Gr_I(A)-$module. Then there exists $n_0$ such that $Gr_I(A)_1Gr_J(B)_n=Gr_J(B)_{n+1}$ for all $n \geq n_0.$ Thus for $n \geq n_0$ 
 \begin{align*}
  \frac{J^{n+1}}{J^{n+2}} & = \frac{I}{I^2}.\frac{J^n}{J^{n+1}}\\
  & = \frac{\psi(I)J^n + J^{n+2}}{J^{n+2}}.
 \end{align*}
 So  $J^{n+1}= \psi(I)J^{n}+ J^{n+2}.$ Therefore by Nakayama's lemma $J^{n+1}= \psi(I)J^{n}$ for all $n \geq n_0$.
\end{proof}


\begin{remark}
 Let $\mathcal{R}(I, A)= A[It]= \bigoplus_{n\geq 0}I^nt^n$ be the Rees ring of $A$ with respect to $I.$ If $M$ is a finite $A$ module then
 set $\mathcal{R}(I, M) = \bigoplus_{n \geq 0}I^nMt^n$ the Rees module of $M$ with respect to $I.$ It can also be easily shown $\psi(I)B $ 
 is a reduction of $J$ if and only if $\mathcal{R}(J, B)$ is a finite $\mathcal{R}(I, A)-$module.
\end{remark}
\begin{remark}
 Set $W=\bigoplus_{n\geq 0}J^{n+1}/I^{n+1}B$. Then we have 
 \[0\longrightarrow \mathcal{R}(I, B)\longrightarrow \mathcal{R}(J, B)\longrightarrow W(-1)\longrightarrow 0\]
an exact sequence of $\mathcal{R}(I, A)$ modules . So $W(-1)$ is a finite  $\mathcal{R}(I, A)$ module. Hence $W$ is 
a finite $\mathcal{R}(I, A)-$module.
\end{remark}
The following is our main object to study associated graded modules and Hilbert coefficients.
\begin{definition}
 Let $M$ be an $A-$module. Set $L^I(M)=\bigoplus_{n \geq 0}M/I^{n+1}M.$ Then the  $A-$module $L^I(M)$ can be given an $\mathcal{R}(I,A)-$module 
 structure as follows. The Rees ring $\mathcal{R}(I,A)$ is a subring of $A[t]$ and so $A[t]$ is an $\mathcal{R}(I,A)-$module. Therefore
 $M[t] = M \otimes_A A[t]$ is an $\mathcal{R}(I,A)-$module. The exact sequence 
 \[0\longrightarrow  \mathcal{R}(I,M) \longrightarrow M[t] \longrightarrow L^I(M)(-1) \longrightarrow 0\]
 defines an $\mathcal{R}(I,A)-$module structure on $L^I(M)(-1)$ and so on $L^I(M).$
 Notice $L^I(M)$ is not a finitely generated $\mathcal{R}(I,A)$-module.
\end{definition}

\begin{remark}\label{L^I-mod-u}
 Let $x$ be $M$ superficial with respect to $I$ and set $u=xt\in \mathcal{R}(I, A)_1$. Notice that $L^I(M)/uL^I(M) = L^I(M/xM).$
\end{remark}
 By \cite[Proposition  5.2]{TJ1} we have the following:
\begin{remark}\label{x^*-u=xt-reg}
 Let $x \in I\backslash I^2.$ Then $x^* \in Gr_I(A)_1$ is $Gr_I(M)-$regular if and only if $u = xt \in \mathcal{R}(I, A)_1$ is $L^I(M)-$regular.
\end{remark}

\s Let $\psi:A\longrightarrow B$ as before and $\dim A =d$. Assume that $I$ is $\m-$primary and $J$ is $\mathfrak{n}-$primary.
Define $L^I(B) = \bigoplus_{n \geq 0}B/I^{n+1}B$ and $L^J(B) = \bigoplus_{n \geq 0}B/J^{n+1}.$ As $L^J(B)$ is a $\mathcal{R}(J,B)-$
module and so as a $\mathcal{R}(I,A)-$module. For each $n\geq 0$ we have 
\[0\longrightarrow \frac{J^{n+1}}{I^{n+1}B}\longrightarrow \frac{B}{I^{n+1}B}\longrightarrow \frac{B}{J^{n+1}}\longrightarrow 0\]
an exact sequence of $A-$ modules.  So we get
\[0\longrightarrow W \longrightarrow L^I(B)\longrightarrow L^J(B)\longrightarrow 0\]
an exact sequence of $\mathcal{R}(I, A)$ modules. Therefore 
\[\sum \lambda_A \left(\frac{J^{n+1}}{I^{n+1}B}\right)z^n = \frac{h^I_B(z)- \delta h^J_B(z)}{(1-z)^{d+1}}. \]
Note that  $h^I_B(1)- \delta h^J_B(1)=0.$ So we can write  
\[ \delta h^J_B(z) = h^I_B(z) + (z-1)r(z).\]
Therefore 
we have\\
$1)$ $\delta e^J_0(B)  = e^I_0(B)$.\\
$2)$  $\delta e^J_i(B) = e^I_i(B) + (r^{(i-1)}(1)/(i-1)!)$ for $i\geq 1.$
 
\begin{remark}
 If $\delta e^J_1(B) \not = e^I_1(B)$ then $\dim W =d.$
\end{remark}

 We need the following technical result.
 \begin{lemma}\label{technical}
  Let $\psi: A \longrightarrow B$ as before. Assume the residue field of $A$ is infinite. Then there exists $x \in I$ such that\\
  $(1)$ $x$ is $A$ superficial with respect to $I.$\\
  $(2)$ $\psi(x)$ is $B$ superficial with respect to $J.$
 \end{lemma}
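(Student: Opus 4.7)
The plan is to construct $x$ as a sufficiently generic element of $I/I^2$, exploiting the assumption that $A/\m$ is infinite. Set $G := Gr_I(A)$ and $G' := Gr_J(B)$. Recall that $x\in I$ is $A$-superficial for $I$ if and only if the initial form $x^{\ast}\in G_1 = I/I^2$ lies outside every \emph{relevant} associated prime of $G$ (those primes $\mathfrak{P}$ with $\mathfrak{P}\not\supseteq G_+$), and the analogous statement holds for $\psi(x)\in J$ with respect to $G'$. Since there are only finitely many such associated primes on each side, it is enough to exhibit the bad locus in $G_1$ as a finite union of proper $A/\m$-subspaces; then the standard fact that a vector space over an infinite field is not a finite union of proper subspaces supplies the desired $\bar x\in G_1$.

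The homomorphism $\psi$ induces a graded ring homomorphism $\bar\psi\colon G\to G'$, and by Lemma~\ref{red-finite-Gr} the hypothesis that $\psi(I)B$ is a reduction of $J$ is equivalent to $G'$ being finite as a $G$-module via $\bar\psi$. Consequently $G'/\bar\psi(G_+)G'$ is finitely generated over $G_0 = A/I$, hence of finite length, and therefore vanishes in every sufficiently large degree. This yields
\[
G'_+\subseteq\sqrt{\bar\psi(G_+)G'}.
\]

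The main step is then to show that for every relevant associated prime $\mathfrak{Q}$ of $G'$, the pre-image $\bar\psi^{-1}(\mathfrak{Q})$ is relevant in $G$: if instead $\bar\psi^{-1}(\mathfrak{Q})\supseteq G_+$, then $\bar\psi(G_+)G'\subseteq \mathfrak{Q}$, and the displayed inclusion forces $G'_+\subseteq \mathfrak{Q}$, contradicting relevance of $\mathfrak{Q}$. Hence $\bar\psi^{-1}(\mathfrak{Q})\cap G_1$ is a proper $A/\m$-subspace of $G_1$; classically, so is $\mathfrak{P}\cap G_1$ for any relevant associated prime $\mathfrak{P}$ of $G$ (because $G$ is generated over $G_0$ by $G_1$, so $\mathfrak{P}\supseteq G_1$ would force $\mathfrak{P}\supseteq G_+$). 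Taking the union of these finitely many proper $A/\m$-subspaces and choosing $\bar x\in G_1$ outside it, then lifting to some $x\in I$, gives an $x^{\ast}$ that avoids every relevant associated prime of $G$, yielding (1), while $\psi(x)^{\ast}=\bar\psi(\bar x)$ avoids every relevant associated prime of $G'$, yielding (2). The only genuine obstacle is the step that promotes ``$\psi(I)B$ is a reduction of $J$'' to the relevance of pre-images; this is precisely the content extracted from Lemma~\ref{red-finite-Gr}.
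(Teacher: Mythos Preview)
Your argument is correct and follows essentially the same route as the paper. Both proofs hinge on Lemma~\ref{red-finite-Gr} to know that $G'=Gr_J(B)$ is a finite $G=Gr_I(A)$-module, and then select a degree-one element that is filter-regular on both. The paper packages this by choosing $z\in G_1$ filter-regular on the $G$-module $G\oplus G'$ and observing that $\hat\psi(z)$ is then $G'$-filter-regular; you unpack the same content by pulling the relevant primes of $\operatorname{Ass}_{G'}(G')$ back along $\bar\psi$ and checking they remain relevant in $G$, which is exactly what finiteness of $G'$ over $G$ buys. One minor wording issue: $G_1=I/I^2$ is an $A/I$-module, not literally an $A/\m$-vector space, so the avoidance step should be stated either as graded prime avoidance over a local base with infinite residue field, or by passing to $G_1/\m G_1$ via Nakayama; the argument itself is unaffected.
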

 \begin{proof}
  Note that $Gr_J(B)$ is a finite $Gr_I(A)-$module. Also $\psi$ induces a natural map $\hat{\psi}: Gr_I(A)\longrightarrow Gr_J(B).$ Let 
  $z \in Gr_I(A)_1$ be a $Gr_I(A)\oplus Gr_J(B)$ filter regular. Then note that $\hat{\psi}(z)$ is $Gr_J(B)$ filter regular. Let $x \in I$
  be such that $x^* = z.$ Then clearly $x$ is $A$ superficial with respect to $I$. Also note that $\psi(x)^* = \hat{\psi}(z)$. So $\psi(x)$
  is $B$ superficial with respect to $J$.
 \end{proof}
The following result easily follows by induction on the dimension of the ring.
\begin{corollary}\label{cor-technical-lem}
 Let $\psi : A \longrightarrow B$ as before. Assume that the residue field of $A$ is infinite. Let $\dim A = d \geq 1$. Then there exist 
 $\underline{x} = x_1, \cdots , x_d \in I$ such that\\
 $(1)$ $\underline{x}$ is $A$ superficial sequence with respect to $I$.\\
 $(2)$ $\psi(\underline{x})$ is $B$ superficial sequence with respect to $J$.
\end{corollary}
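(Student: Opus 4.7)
The plan is a straightforward induction on $d = \dim A$, with Lemma \ref{technical} supplying both the base case and the single element extracted at each inductive step.

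For the base case $d=1$, Lemma \ref{technical} directly produces an element $x_1 \in I$ that is $A$-superficial with respect to $I$ and such that $\psi(x_1)$ is $B$-superficial with respect to $J$, which by definition constitutes a length-one superficial sequence on each side.

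For the inductive step, assume the result for $d-1 \geq 1$. Apply Lemma \ref{technical} to obtain $x_1 \in I$ satisfying $(1)$ and $(2)$ of the lemma. Now pass to the quotient data $\overline{A} = A/(x_1)$, $\overline{B} = B/\psi(x_1)B$, with ideals $\overline{I} = I\overline{A}$ and $\overline{J} = J\overline{B}$, and the induced map $\overline{\psi}: \overline{A} \to \overline{B}$. One then has to check that the hypothesis of the corollary survives descent, namely: $\overline{A}$ and $\overline{B}$ are Cohen-Macaulay local rings of dimension $d-1$ (since $x_1$, being superficial in a Cohen-Macaulay ring of positive dimension, is a nonzerodivisor in $A$, and likewise $\psi(x_1)$ in $B$, using that $B$ is also Cohen-Macaulay and $\psi(x_1)$ is superficial, hence a parameter, of $J$); the map $\overline{\psi}$ is finite local with $\dim \overline{A} = \dim \overline{B}$; the ideals $\overline{I}$ and $\overline{J}$ are primary to the respective maximal ideals; and $\overline{\psi}(\overline{I})\overline{B}$ is a reduction of $\overline{J}$, which is immediate since reductions pass to quotients. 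The residue field of $\overline{A}$ equals that of $A$ and is thus still infinite.

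Applying the inductive hypothesis to this quotient setup yields $\overline{x}_2, \ldots, \overline{x}_d \in \overline{I}$ forming an $\overline{A}$-superficial sequence for $\overline{I}$ whose images form a $\overline{B}$-superficial sequence for $\overline{J}$. Lifting to elements $x_2, \ldots, x_d \in I$, the sequence $x_1, x_2, \ldots, x_d$ is by the definition of a superficial sequence (which is iterative: $x_i$ superficial in the quotient by $(x_1, \ldots, x_{i-1})$) the required $A$-superficial sequence for $I$, and similarly $\psi(x_1), \ldots, \psi(x_d)$ is a superficial sequence for $B$ with respect to $J$.

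The only non-routine point is the transfer of hypotheses to the quotient, in particular ensuring both quotient rings remain Cohen-Macaulay of the correct dimension; this is the step where the Cohen-Macaulay assumption on both $A$ and $B$ (together with the superficiality of $x_1$ on both sides, guaranteed by Lemma \ref{technical}) is essential. Everything else is formal.
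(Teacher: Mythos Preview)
Your proof is correct and follows exactly the same approach as the paper: induction on $d$, invoking Lemma \ref{technical} at each step. The paper's own proof consists of a single sentence (``Follows easily by induction on $d$ and using Lemma \ref{technical}''), so your write-up is simply a careful unpacking of that remark, including the routine verification that the hypotheses descend to the quotient.
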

\begin{proof}
 Follows easily by induction on $d$ and using Lemma $\ref{technical}$.
\end{proof}
If $\psi(I)B\subsetneq J$ then $W \not = 0$. More over we have the following result:
\begin{lemma}
 Let $\psi : A \longrightarrow B$ as before.   Assume that $Gr_I(B)$ is Cohen-Macaulay. Then the following are equivalent:\\
 $(1)$ $\delta e_1^J(B) = e_1^I(B)$.\\
 $(2)$ $Gr_I(B) = Gr_J(B)$. 
\end{lemma}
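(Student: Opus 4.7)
The easy direction $(2)\Rightarrow(1)$ is immediate: if $Gr_I(B)=Gr_J(B)$ then $\psi(I)^nB=J^n$ for every $n\geq 0$, so the two Hilbert--Samuel functions coincide after scaling by $\delta$, and in particular $\delta e_1^J(B)=e_1^I(B)$.

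For $(1)\Rightarrow(2)$, my plan is to start from the formula $\delta e_1^J(B)=e_1^I(B)+r(1)$ derived just before this lemma, where $r(z)/(1-z)^d$ is the Hilbert series of $W=\bigoplus_{n\geq 0}J^{n+1}/\psi(I)^{n+1}B$. Hypothesis $(1)$ is thus equivalent to $r(1)=0$, i.e.\ $\dim W<d$. The goal is to upgrade this to $W=0$, since $W=0$ is the same as $\psi(I)B=J$ (it forces $\psi(I)^nB=J^n$ for all $n$), which is equivalent to $(2)$. I would argue by induction on $d\geq 1$.

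For the base case $d=1$, Cohen--Macaulayness of $Gr_I(B)$ is equivalent (Northcott/Ooishi/Huneke in dimension one) to $e_1^I(B)=e_0^I(B)-\lambda_A(B/\psi(I)B)$. Northcott's inequality applied to $J$ on the $B$-module $B$, multiplied by $\delta$, gives $\delta e_1^J(B)\geq e_0^I(B)-\lambda_A(B/J)$. Substituting $(1)$ yields $\lambda_A(B/\psi(I)B)\leq\lambda_A(B/J)$, and the containment $\psi(I)B\subseteq J$ gives the reverse inequality; so $\psi(I)B=J$.

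For the inductive step $d\geq 2$, after the standard passage to an infinite residue field, I would choose $x\in I$ satisfying simultaneously: (i) $x$ is $A$-superficial with respect to $I$, (ii) $\psi(x)$ is $B$-superficial with respect to $J$ (both available from Lemma~\ref{technical}), and (iii) $x^*\in Gr_I(B)_1$ is $Gr_I(B)$-regular. Condition (iii) is an extra Zariski-open condition on the image of $I/\mathfrak{m}I$ in $IB/I^2B$: since $Gr_I(B)$ is Cohen--Macaulay of dimension $\geq 1$, no associated prime of $Gr_I(B)$ can contain the whole image of $I/\mathfrak{m}I$ (otherwise that prime would contain $Gr_I(B)_1$ and force $\dim Gr_I(B)=0$), and prime avoidance over an infinite residue field then supplies $x$. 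Setting $\bar A=A/(x)$, $\bar B=B/\psi(x)B$, $\bar I=I\bar A$, $\bar J=J\bar B$, the hypothesis transfers to the $(d-1)$-dimensional setup: $\bar A\to\bar B$ is still finite between Cohen--Macaulay rings of dimension $d-1$; $\bar I\bar B$ reduces $\bar J$; $Gr_{\bar I}(\bar B)=Gr_I(B)/x^*Gr_I(B)$ is Cohen--Macaulay by (iii); and $\delta$ is unchanged. The standard invariance of Hilbert coefficients for $i\leq d-1$ under reduction by a superficial regular element on a Cohen--Macaulay module gives $e_1^I(B)=e_1^{\bar I}(\bar B)$ and $e_1^J(B)=e_1^{\bar J}(\bar B)$, so $(1)$ is inherited. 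By induction $\bar I\bar B=\bar J$; lifting this back to $B$ and using $\psi(x)\in\psi(I)B\cap J$ to absorb the added term gives $\psi(I)B=J$, hence $(2)$.

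The main obstacle is the joint choice of $x$ in the inductive step: condition (iii) needs to be realised simultaneously with the two superficiality conditions, which is what forces the prime-avoidance argument in the image of $I/\mathfrak{m}I$ inside $Gr_I(B)_1$; everything else is routine bookkeeping with Hilbert polynomials and the transfer of the hypothesis mod a well-chosen superficial element.
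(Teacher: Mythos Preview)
Your inductive step for $d\geq 2$ is sound and parallels the paper's argument (the paper phrases it as $W/uW=0$ by induction and then applies graded Nakayama to get $W=0$; you phrase it as $\psi(\bar I)\bar B=\bar J$ by induction and then lift). The problem is your base case $d=1$. The claimed equivalence ``$Gr_I(B)$ Cohen--Macaulay $\Longleftrightarrow e_1^I(B)=e_0^I(B)-\lambda_A(B/\psi(I)B)$'' is false in the direction you use: Huneke--Ooishi only says that Northcott equality forces reduction number $\leq 1$ (hence $Gr$ Cohen--Macaulay), not conversely. For a concrete failure take $B=k[[t^3,t^5]]$ with $I=\m$; then $Gr_\m(B)\cong k[X,Y]/(Y^3)$ is Cohen--Macaulay, yet $e_0=3$, $e_1=3$, $\lambda(B/\m)=1$, so $e_1\neq e_0-\lambda(B/\m)$. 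Your chain of inequalities therefore does not produce $\lambda_A(B/\psi(I)B)\leq\lambda_A(B/J)$, and the argument collapses.

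The paper's base case exploits the Cohen--Macaulay hypothesis differently: condition (1) gives $r(1)=0$, so $W=\bigoplus J^{n+1}/\psi(I)^{n+1}B$ has finite length; but $W$ embeds in $L^I(B)$, and Cohen--Macaulayness of $Gr_I(B)$ forces $H^0_\M(L^I(B))=0$, whence $W=0$. An elementary variant that fits your framework: from $\lambda(W)<\infty$ one has $J^{n}=\psi(I)^{n}B$ for $n\gg 0$; choosing $x$ with $\psi(x)^*$ regular on $Gr_I(B)$ (your condition (iii), available already in dimension one since $Gr_I(B)$ is Cohen--Macaulay of positive dimension), any $a\in J$ satisfies $a\,\psi(x)^{n-1}\in J^{n}=\psi(I)^{n}B$, and regularity of $\psi(x)^*$ gives $\psi(I)^{n}B:_B\psi(x)^{n-1}=\psi(I)B$, so $a\in\psi(I)B$.
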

\begin{proof}
 We first prove that $Gr_J(B)$ is Cohen-Macaulay. By Sally descent and Lemma \ref{technical} we may assume that $\dim B =1$. Now consider the
 exact sequence
 \[0\longrightarrow W \longrightarrow L^I(B)\longrightarrow L^J(B) \longrightarrow 0,\]
 where $W=\bigoplus J^{n+1}/I^{n+1}B$. As $\delta e_1^J(B) = e_1^I(B)$ we get $\lambda(W) < \infty $. Let $\mathfrak{M}$ be the unique homogeneous
 maximal ideal of $\mathcal{R}(I, A)$ and $H^i(-)=H^i_{\mathfrak{M}}(-)$. As $Gr_I(B)$ is Cohen-Macaulay by Remark \ref{x^*-u=xt-reg} 
 we get $H^0(L^I(B))=0$. So we get $W =0$. Therefore $L^I(B) = L^J(B)$.
 So $H^0(L^J(B)) = 0$. Thus $Gr_J(B)$ is Cohen-Macaulay.
 
 Now assume $\dim B \geq 2$. We prove $L^I(B) = L^J(B)$. This will prove the result. Note that $\depth W \geq 1$. Set $u : = xt$ where $x$
 is $A-$superficial with respect to $I$ and $\psi(x)$ is $B-$superficial with respect to $J$. Then we have an exact sequence
 \[0\longrightarrow \frac{W}{uW} \longrightarrow \frac{L^I(B)}{uL^I(B)} \longrightarrow \frac{L^J(B)}{uL^J(B)} \longrightarrow 0.\]
 By induction and Remark \ref{L^I-mod-u} we get $L^I(B)/uL^I(B) = L^J(B)/uL^J(B)$. So $W = uW$. By graded Nakayama's Lemma $W = 0$. Hence $L^I(B) = L^J(B)$.
\end{proof}

\section{Extension of Northcott's inequality.}

The following result easily follows from a result due to Huckaba and Marley  (see, \cite[Theorem 4.7]{HM}). However our techniques to prove
this extends to prove our
other results.
\begin{theorem}\label{northcott-extn}
 Let $\psi : A \longrightarrow B$ as before.
 Assume that $I$ is $\m-$primary and $J$ is $\mathfrak{n}-$primary. If $Gr_I(B) $ is  Cohen-Macaulay then $\delta e^J_1(B) \geq e^I_1(B)+ \lambda(J/IB).$ 
  If  equality holds then $Gr_J(B)$ is also Cohen-Macaulay.
\end{theorem}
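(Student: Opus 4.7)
The plan is to induct on $d=\dim B$. After replacing $A,B$ by their infinite-residue-field extensions, Lemma~\ref{technical} produces $x\in I$ that is $A$-superficial \wrt\ $I$ with $y:=\psi(x)$ simultaneously $B$-superficial \wrt\ $J$; both are regular elements since $A,B$ are \CM\ of positive dimension.

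For the base case $d=1$, I would use the well-known formula (valid whenever $B$ is \CM\ of dimension one and the superficial element is regular)
\[
e_1^I(B)=\sum_{n\ge 1}\lambda_A\bigl(I^nB/yI^{n-1}B\bigr), \qquad \delta\, e_1^J(B)=\sum_{n\ge 1}\lambda_A\bigl(J^n/yJ^{n-1}\bigr).
\]
The \CM\ hypothesis on $Gr_I(B)$ translates to $I^nB\cap yB=yI^{n-1}B$ for every $n$; this forces the natural map $I^nB/yI^{n-1}B\to J^n/yJ^{n-1}$ to be injective, with cokernel $J^n/(I^nB+yJ^{n-1})$. Subtracting the two sums term by term and keeping only the $n=1$ summand yields $\delta e_1^J(B)-e_1^I(B)\ge\lambda_A(J/IB)$. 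Equality collapses to $J^n=I^nB+yJ^{n-1}$ for $n\ge 2$; chasing this through the CM relation $I^{n+1}B:y=I^nB$ gives $J^{n+1}:y=J^n$ for every $n\ge 0$, which is precisely $y^*$ being regular on $Gr_J(B)$, making $Gr_J(B)$ \CM.

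For the inductive step $d\ge 2$, I would pass to $\bar A=A/xA$, $\bar B=B/yB$, $\bar I=I\bar A$, $\bar J=J\bar B$. Regularity of $x^*$ on $Gr_I(B)$ gives $Gr_{\bar I}(\bar B)=Gr_I(B)/x^*Gr_I(B)$, still \CM. Under superficial reduction the coefficients $e_0$ and $e_1$ are preserved, $\delta$ is unchanged, and a short direct check using $yB\subseteq IB\subseteq J$ shows $\lambda_A(\bar J/\bar I\bar B)=\lambda_A(J/IB)$. The induction hypothesis applied to $\bar\psi:\bar A\to\bar B$ then yields the desired inequality, and in the equality case gives $Gr_{\bar J}(\bar B)$ \CM. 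Sally descent applied to the $B$-superficial element $y\in J$ lifts this to $Gr_J(B)$ being \CM.

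The hard part will be the dimension-one equality analysis: verifying that the purely combinatorial identity $J^n=I^nB+yJ^{n-1}$ for $n\ge 2$ really does force $y^*$ to be regular on $Gr_J(B)$, which is where the \CM\ hypothesis on $Gr_I(B)$ is leveraged non-trivially. Once handled, the inductive step and the descent are formal. A more conceptual variant, better suited to generalizing to the other theorems in the paper, would run the argument through the $\mathcal{R}(I,A)$-modules $L^I(B)$, $L^J(B)$, $W$ fitting into $0\to W\to L^I(B)\to L^J(B)\to 0$, exploiting the $L^I(B)$-regularity of $u=xt$ granted by Remark~\ref{x^*-u=xt-reg}.
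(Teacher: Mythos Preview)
Your argument is correct. Both you and the paper reduce to $d=1$ via Sally descent; the difference lies in the one-dimensional core. You compute $\delta e_1^J(B)-e_1^I(B)$ termwise as $\sum_{n\ge1}\lambda_A\bigl(J^n/(I^nB+yJ^{n-1})\bigr)$ via the Huckaba--Marley sums and handle equality by the colon-ideal chase you outline (which does go through: from $yb\in J^{n+1}=I^{n+1}B+yJ^n$ one gets $y(b-c)\in I^{n+1}B$ for some $c\in J^n$, and the \CM\ relation $I^{n+1}B:y=I^nB$ finishes it). The paper instead runs precisely the ``conceptual variant'' you sketch at the end: it works with $W=\bigoplus_{n\ge0}J^{n+1}/I^{n+1}B$ sitting in $0\to W\to L^I(B)\to L^J(B)\to 0$, uses Remark~\ref{x^*-u=xt-reg} to see that $u=xt$ is $L^I(B)$-regular and hence $W$-regular, so that $\lambda(W_n)$ is nondecreasing with stable value $e_0(W)=\delta e_1^J(B)-e_1^I(B)\ge\lambda(W_0)=\lambda(J/IB)$; in the equality case each $u\colon W_n\to W_{n+1}$ becomes an isomorphism and the Snake Lemma makes $u$ regular on $L^J(B)$. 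Your approach is more elementary and closer to the classical proofs of Northcott's inequality, while the paper's $L$-module framework is exactly what gets recycled in the later theorems on $c_2$ and $c_3$, where one needs cohomological control of $W$ and $\widetilde W$ rather than termwise length counts.
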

\begin{proof}
  By Sally machine we may assume that $\dim A =1.$ Set $W_i=J^{i+1}/I^{i+1}B$ and $W= \bigoplus_{i\geq 0} W_i$. Note that $\lambda(W_i) = e_0(W)$ for $i\gg 0.$
   Let $(x)$ be a minimal reduction of $I.$ Let
 $u=xt \in \mathcal{R}(I, A)_1$.  Then we have 
 \begin{center}
$
\begin{CD}
0 @>>> W_i @>>> L^I(B)_i @>>> L^J(B)_i @>>> 0 \\
  @.       @VV\text{u}V @VV\text{u}V @VV\text{u}V\\
0 @>>> W_{i+1} @>>> L^I(B)_{i+1} @>>> L^J(B)_{i+1} @>>> 0 
\end{CD}
$
\end{center}
a commutative diagram with exact rows. As $Gr_I(B)$ is Cohen-Macaulay so $x^*$ is $Gr_I(B)$ regular. So by Remark \ref{x^*-u=xt-reg}, $u$ is $L^I(B)$ regular. Thus $L^I(B)_i\stackrel{u}\longrightarrow L^I(B)_{i+1}$ 
is injective. As $W \subset L^I(B)$ we also get $W_i \stackrel{u} \longrightarrow W_{i+1}$ is injective. Therefore 
$\lambda(W_0)\leq \lambda(W_1)\leq \cdots \leq \lambda(W_i) = e_0(W) $ for $i \gg 0.$ So
 \begin{align*}
\delta e_1^J(B) & = e_1^I(B) + e_0(W)\\
 & \geq e_1^I(B) + \lambda(J/IB).
 \end{align*}
 Now suppose $\delta e_1^J(B) = e_1^I(B) + \lambda(J/IB).$ 
 Then $e_0(W) = \lambda(J/IB).$
Thus $\lambda(W_n)=\lambda(W_0)$ for all $n\geq 0.$ So we get $W_i\stackrel{u}\longrightarrow W_{i+1}$ is an isomorphism. By 
Snake lemma we get
\[L^J(B)_i \stackrel{u}\longrightarrow L^J(B)_{i+1}\]
is injective. Therefore $u$ is $L^J(B)$ regular. By Remark \ref{x^*-u=xt-reg} $x^*$ is $Gr_J(B)-$regular.
  Hence $Gr_J(B)$ is Cohen-Macaulay.
\end{proof}
Now we give an example where Theorem \ref{northcott-extn} holds.
\begin{example}
 Let $A = \mathbb{Q}[|X,Y,Z,W|]/(XY-YZ, XZ+Y^3-Z^2)$. Let $x,y,z,w$ be the images of $X,Y,Z,W$ in $A$ respectively. Set $\mathfrak{m}=(x,y,z,w)$.
 Then $(A, \mathfrak{m})$ is a two dimensional  Cohen-Macaulay local ring. Let $I=(x,y,w)$. Note that $I$ is $\mathfrak{m}-$primary and $z$
 is integral over $I$. So $I$ is a reduction of $\mathfrak{m}$. It is proved in \cite[example 3.6]{JR} $Gr_I(A)$ is Cohen-Macaulay. 
 Using CoCoA (see, \cite{cocoa}) we have  computed $e_1^I (A) = 6$, $e^{\m}_1(A)=7$ and 
 $\lambda(A/I)=2$. So $e^{\m}_1(A)=e_1^I(A) + \lambda(\m/I)$. Hence by Theorem \ref{northcott-extn} we get $Gr_{\m}(A)$ is Cohen-Macaulay.
\end{example}

The following example shows that the condition $Gr_I(B)$ is Cohen-Macaulay is essential.

\begin{example}\label{CM-essential}
 Let $A= \mathbb{Q}[|X,Y,Z,U,V,W|]/(Z^2,ZU,ZV,UV,YZ-U^3,XZ-V^3)$, with $X,Y,Z,U,V, W$ inderterminates. Let $x,y,z,u,v, w$ be the images of $X,Y,Z,U,V, W$
 in $A$. Set $\mathfrak{m}=(x,y,z,u,v,w)$. Then $(A, \mathfrak{m})$ is a three dimensional Cohen-Macaulay local ring. Let $I=(x,y,u,w)$.
 Note that $v^4= vxz=0$ and $z^2 =0$ in $A$.  Thus $z, v$ are integral over $I$. So $I$ is a reduction of $\mathfrak{m}$. Let $J=(x,y,w)$. Then $J$
 is a minimal reduction $I$. Using CoCoA (see, \cite{cocoa}) we have checked that  
 \[P_I(t)= \frac{4+t^2+t^3}{(1-t)^3} \quad \text{and} \quad P_{\mathfrak{\m}}(t) =\frac{1+3t+3t^3-t^4}{(1-t)^3}.\]
 We also checked $\lambda(I/J)=2, \lambda(I^2/I^2 \cap J)=1$ and $\lambda(I^3/I^3 \cap J)=0.$ Therefore by
 \cite[Theorem 4.7]{HM} we get
 $\depth Gr_I(A)< 3.$ Hence $Gr_I(A)$ is not Cohen-Macaulay. Also note that $h-$polynomial of $Gr_{\m}(A)$ has negative coefficient. So
 $Gr_{\m}(A)$ is also not Cohen-Macaulay. It is easy to see that  $e_1^{\m}(A)=e_1^I(A) + \lambda(\m/I).$
\end{example}

\section{The case of dimension two}
 Let $\mathfrak{a}$ be an ideal in a Notherian ring $S$ and $M$ a finite $S$ module. Then for $n \geq 1$,
$\widetilde{\mathfrak{a}^nM} := \cup_{k\geq 1}(\mathfrak{a}^{n+k}M:_M\mathfrak{a}^k)$
is called the {\it Ratliff-Rush} closure of $\mathfrak{a}^nM$.

In general if $\mathfrak{a}\subset \mathfrak{b}$ be two ideals in a ring $S$ then it need not imply that
$\widetilde{\mathfrak{a}} \subset \widetilde{\mathfrak{b}}$. However for 
reduction of ideals we have the following:
\begin{proposition}
 Let $S$ be a Notherian ring and let $\mathfrak{a} \subset \mathfrak{b}$ be a reduction of  $\mathfrak{b}$. 
 Then $\widetilde{\mathfrak{a}^{n}} \subset \widetilde{\mathfrak{b}^{n}} $ for all $n \geq 1$.
\end{proposition}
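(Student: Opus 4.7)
The plan is to unwind the definition of Ratliff–Rush closure and exploit the standard consequence of reduction that $\mathfrak{b}^{m+1} = \mathfrak{a}\mathfrak{b}^m$ for all sufficiently large $m$, which iterates to $\mathfrak{b}^{m+j} = \mathfrak{a}^j \mathfrak{b}^m$ for every $j \geq 0$.

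First I fix $n \geq 1$ and pick $x \in \widetilde{\mathfrak{a}^n}$. By definition there exists $k \geq 1$ such that $x \mathfrak{a}^k \subseteq \mathfrak{a}^{n+k}$. The obvious containment $\mathfrak{a}^{n+k} \subseteq \mathfrak{b}^{n+k}$ is not immediately useful, because to land in $\widetilde{\mathfrak{b}^n}$ I need to find some exponent $\ell \geq 1$ with $x \mathfrak{b}^\ell \subseteq \mathfrak{b}^{n+\ell}$. The natural idea is to replace the test ideal $\mathfrak{a}^k$ by a power of $\mathfrak{b}$ that still has $\mathfrak{a}^k$ as a factor.

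Since $\mathfrak{a}$ is a reduction of $\mathfrak{b}$, choose $c \geq 0$ with $\mathfrak{a}\mathfrak{b}^m = \mathfrak{b}^{m+1}$ for all $m \geq c$. An easy induction on $j$ gives $\mathfrak{b}^{c+j} = \mathfrak{a}^j \mathfrak{b}^c$ for all $j \geq 0$. Setting $\ell := k + c$, I get
\[
x \mathfrak{b}^{\ell} = x \mathfrak{a}^k \mathfrak{b}^c \subseteq \mathfrak{a}^{n+k} \mathfrak{b}^c \subseteq \mathfrak{b}^{n+k} \mathfrak{b}^c = \mathfrak{b}^{n+\ell},
\]
so $x \in (\mathfrak{b}^{n+\ell} :_S \mathfrak{b}^\ell) \subseteq \widetilde{\mathfrak{b}^n}$, completing the proof.

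There is no real obstacle here; the only point to be careful about is the direction of the reduction condition: it is $\mathfrak{a}^k$ (not $\mathfrak{b}^k$) that annihilates $x$ into $\mathfrak{a}^{n+k}$, and the reduction hypothesis is precisely what allows one to factor a large power of $\mathfrak{b}$ through $\mathfrak{a}^k$. No Noetherian hypothesis on $M$ is needed beyond the setup, and the same argument works module-theoretically if one later wants to extend the statement to $\widetilde{\mathfrak{a}^n M} \subseteq \widetilde{\mathfrak{b}^n M}$.
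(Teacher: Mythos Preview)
Your proof is correct and is essentially the same as the paper's: both pick $x\in\widetilde{\mathfrak{a}^n}$ with $x\mathfrak{a}^k\subseteq\mathfrak{a}^{n+k}$, use the reduction identity $\mathfrak{a}^k\mathfrak{b}^r=\mathfrak{b}^{k+r}$ for $r$ large, and conclude $x\mathfrak{b}^{k+r}\subseteq\mathfrak{b}^{n+k+r}$. The only cosmetic difference is notation (your $c$ is the paper's $s$, your $\ell$ is their $k+r$).
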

\begin{proof}
 Let $x \in \widetilde{\mathfrak{a}^n}$. So $x\mathfrak{a}^k \subset \mathfrak{a}^{n+k}$ for some $k.$ Thus 
 $x\mathfrak{a}^k\mathfrak{b}^r \subset \mathfrak{a}^{n+k}\mathfrak{b}^r$ for all $r\geq 0.$
As $\mathfrak{a} \subset \mathfrak{b}$ is a reduction so $\mathfrak{a}\mathfrak{b}^s =\mathfrak{b}^{s+1}$ for $s \gg 0.$ Choose $r \geq s.$
Then $\mathfrak{a}^k\mathfrak{b}^r =\mathfrak{b}^{k+r}.$ Therefore $x\mathfrak{b}^{k+r} \subset \mathfrak{b}^{n+k+r}.$ Thus $x \in \widetilde{\mathfrak{b}^n}. $
 \end{proof}

\s Let $M$ be an $A-$module. Define $\widetilde{L}^I(M)= \bigoplus_{n \geq 0}M/\widetilde{I^{n+1}M}.$ Then $\widetilde{L}^I(M)$ is a
$\mathcal{R}(\widetilde{I}, A)-$module so $\mathcal{R}(I, A)-$module. Set 
\[\widetilde{L}^J(B) = \bigoplus_{n \geq 0}B/\widetilde{J^{n+1}} \quad \text{and} \quad   \widetilde{W}= \bigoplus_{n \geq 0}\widetilde{J^{n+1}}/I^{n+1}B.\]
Then we have 
\[0\longrightarrow \widetilde{W} \longrightarrow L^I(B) \longrightarrow \widetilde{L}^J(B) \longrightarrow 0 \]
 an exact sequence of $\mathcal{R}(I, A)$ modules. Note that $h_B^I(1) = \delta h_B^J(1) = \delta \widetilde{h}_B^{J}(1).$
 Therefore we can write 
 \[\delta \widetilde{h}_B^{J}(z) = h_B^I(z) + (z-1)\widetilde{r}(z) \quad \text{and} \quad H_{\widetilde{W}}(z) = \frac{\widetilde{r}(z)}{(1-z)^d}.\]
 Therefore\\
 $1)$ $\delta \widetilde{e}_0^{J}(B)= e_0^I(B).$\\
 $2)$ $\delta \widetilde{e}_i^{J}(B)= e_i^I(B) + \widetilde{r}^{(i-1)}(1)/(i-1)!.$
 
 Now we extend a famous result of Narita concerning second Hilbert coefficient (see, \cite{NARITA}).
 \begin{proposition}\label{W-hat-CM}
  Let $\psi : A\longrightarrow B$ as before and $\dim A \geq 2.$ Let $I$ be $\m-$primary and $J$ be $\mathfrak{n}-$primary.
  Assume that $Gr_I(B)$ is Cohen-Macaulay. Then 
  \[\delta e_2^J(B) \geq e_2^I(B).\] 
 \end{proposition}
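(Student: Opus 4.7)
The plan is to reduce via Sally's machine to the two-dimensional case, pass from $W$ to its Ratliff-Rush variant $\wt W = \bigoplus_{n \geq 0} \wt{J^{n+1}}/I^{n+1}B$, and show that $\wt W$ is a \CM \ $\R(I,A)$-module of dimension two. Since a \CM \ graded module has an $h$-polynomial with non-negative coefficients, this will force $\wt r'(1) \geq 0$, which is precisely $\delta e_2^J(B) - e_2^I(B)$.

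Concretely, Corollary \ref{cor-technical-lem} produces $x_1, \ldots, x_{d-2} \in I$ forming a joint $A$-superficial sequence for $I$ and (via $\psi$) a $B$-superficial sequence for $J$. Modding out preserves $e_i^I(B)$ and $\delta e_i^J(B)$ for $i \leq 2$ together with the \CM \ property of $Gr_I(B)$, so we may assume $\dim A = 2$. Since $J$ is $\n$-primary in the \CM \ ring $B$, $\wt{J^n} = J^n$ for $n \gg 0$, so $\wt W/W$ has finite length; consequently $e_1(W) = e_1(\wt W) = \wt r'(1) = \delta e_2^J(B) - e_2^I(B)$, and we have reduced to proving $e_1(\wt W) \geq 0$.

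Now pick a joint superficial pair $x_1, x_2$ for the dimension-two situation and set $u_i = x_i t$. Remark \ref{x^*-u=xt-reg} combined with \CM ness of $Gr_I(B)$ gives that $u_1$ is $L^I(B)$-regular, hence $\wt W$-regular as $\wt W \sub L^I(B)$. The standard Ratliff-Rush identity $\wt{J^{n+1}} :_B x_1 = \wt{J^n}$, valid for $J$-superficial $x_1$, shows that $u_1$ is also $\wt L^J(B)$-regular. Applying the snake lemma to multiplication by $u_1$ on
\[0 \longrightarrow \wt W \longrightarrow L^I(B) \longrightarrow \wt L^J(B) \longrightarrow 0\]
yields the clean short exact sequence
\[0 \longrightarrow \wt W/u_1 \wt W \longrightarrow L^I(B)/u_1 L^I(B) \longrightarrow \wt L^J(B)/u_1 \wt L^J(B) \longrightarrow 0,\]
whose middle term equals $L^{\ov I}(\ov B)$ with $\ov B = B/x_1 B$ by Remark \ref{L^I-mod-u}. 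Since $Gr_{\ov I}(\ov B)$ is again \CM, $u_2$ is $L^{\ov I}(\ov B)$-regular and hence regular on the submodule $\wt W/u_1 \wt W$. Thus $u_1, u_2$ is a $\wt W$-regular sequence, so $\depth \wt W \geq 2$; combined with $\dim \wt W \leq 2$ this makes $\wt W$ \CM \ of dimension two.

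Consequently $H_{\wt W}(z) = \wt r(z)/(1-z)^2$ where $\wt r(z)$ is the Hilbert series of the Artinian quotient $\wt W/(u_1, u_2)\wt W$, and therefore has non-negative coefficients. Hence $\wt r'(1) = \sum_i i \, \wt r_i \geq 0$, giving $\delta e_2^J(B) \geq e_2^I(B)$. The main technical obstacle is verifying the Ratliff-Rush colon identity $\wt{J^{n+1}} :_B x_1 = \wt{J^n}$ in the relative finite-map setting; it reduces to the classical statement once $x_1$ is genuinely $J$-superficial in $B$, which the joint superficial choice from Corollary \ref{cor-technical-lem} guarantees.
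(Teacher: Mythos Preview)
Your proof is correct and reaches the same intermediate goal as the paper---that $\widetilde{W}$ is Cohen--Macaulay of dimension two when $\dim A = 2$---but packages the argument through explicit regular sequences rather than local cohomology. The paper works directly with the short exact sequence $0 \to \widetilde{W} \to L^I(B) \to \widetilde{L}^J(B) \to 0$: Cohen--Macaulayness of $Gr_I(B)$ gives $H^0_{\mathfrak M}(L^I(B)) = H^1_{\mathfrak M}(L^I(B)) = 0$, and the Ratliff--Rush property gives $H^0_{\mathfrak M}(\widetilde{L}^J(B)) = 0$, so the long exact sequence in local cohomology kills $H^0_{\mathfrak M}(\widetilde{W})$ and $H^1_{\mathfrak M}(\widetilde{W})$ in one stroke. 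Your version unwinds exactly this vanishing into the regular sequence $u_1, u_2$, which is why you are forced to check the colon identity $\widetilde{J^{n+1}} :_B \psi(x_1) = \widetilde{J^n}$ by hand; that identity is precisely the statement $H^0(\widetilde{Gr}_J(B)) = 0$ translated through Remark~\ref{x^*-u=xt-reg}. The paper's route is shorter and cleaner once one accepts the local-cohomology framework; yours is more elementary and self-contained, and has the advantage that it makes transparent exactly where the Ratliff--Rush hypothesis enters.
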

 \begin{proof}
 We may assume that $\dim A =2$. Let $\mathfrak{M}$ be the unique homogeneous maximal ideal of $\mathcal{R}(I, A)$. Let $H^i (-) := H_{\mathfrak{M}}(-)$ be the $i^{th}$ 
  local cohomology  module. As $Gr_I(B)$ is Cohen-Macaulay so $H^i(Gr_I(B)) =0$ for $i = 0, 1.$ Also note that 
  $H^0(\widetilde{Gr}_J(B))=0.$  By Remark \ref{x^*-u=xt-reg} 
 $H^0(\widetilde{L}^J(B)) =0$ and  $H^i(L^I(B)) =0$ for $i = 0, 1.$
  As we have 
\[0\longrightarrow \widetilde{W} \longrightarrow L^I(B) \longrightarrow \widetilde{L}^J(B) \longrightarrow 0 \]
 an exact sequence of $\mathcal{R}(I, A)$ modules. By considering long exact sequence in local cohomology we get $H^i(\widetilde{W})=0$ for 
 $i = 0 , 1.$ Hence $\widetilde{W}$ is Cohen-Macaulay. So $\widetilde{r}^{(1)}(1) \geq 0.$ Also note that $e_2^I(B) \geq 0$.
 
 Now \begin{align*}
 \delta \widetilde{e}_2^{J}(B) & = e_2^I(B) + \widetilde{r}^{(1)}(1).\\
     & \geq  e_2^I(B).
    \end{align*}
   \end{proof}
   
    \begin{remark}\label{rem-W-hat-CM}
  From the proof of Proposition \ref{W-hat-CM} one can see that $\widetilde{W}$ is Cohen-Macaulay if $\dim A =2$.
 \end{remark}

By analysing the case of equality in the above Theorem we prove:
 
\begin{theorem}\label{ext-narita}
Let $\psi : A\longrightarrow B$ as before and $\dim A =2.$ Let $I$ be $\m-$primary and $J$ be $\mathfrak{n}-$primary.  Assume that 
 $Gr_I(B)$ is Cohen-Macaulay. Suppose $\delta e_2^J(B) = e_2^I(B).$ Then $\widetilde{Gr}_J(B)$ is Cohen-Macaulay. Consequently $Gr_{J^n}(B) $
 is Cohen-Macaula y for $n \gg 0.$
\end{theorem}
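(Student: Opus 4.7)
The plan is to adapt the snake-lemma approach of Theorem~\ref{northcott-extn} to the Ratliff--Rush exact sequence
\[0 \longrightarrow \widetilde{W} \longrightarrow L^I(B) \longrightarrow \widetilde{L}^J(B) \longrightarrow 0\]
already displayed in the excerpt, and then transfer Cohen--Macaulayness from $\widetilde{L}^J(B)$ to $\widetilde{Gr}_J(B)$ via the usual $\widetilde{Gr}$-$\widetilde{L}$ sequence. First I would translate the hypothesis. Since $\widetilde{J^{n+1}}=J^{n+1}$ for $n\gg 0$, one has $\widetilde{e}_i^J(B)=e_i^J(B)$ for every $i$, so the identity $\delta\widetilde{e}_2^J(B)=e_2^I(B)+\widetilde{r}'(1)$ recorded just before Proposition~\ref{W-hat-CM} becomes $\widetilde{r}'(1)=0$. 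By Remark~\ref{rem-W-hat-CM}, $\widetilde{W}$ is Cohen--Macaulay with Hilbert series $\widetilde{r}(z)/(1-z)^2$, and the vanishing $\widetilde{r}'(1)=0$ is exactly $e_1(\widetilde{W})=0$.

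Next, choose $x\in I$ as in Lemma~\ref{technical} and set $u=xt\in\mathcal{R}(I,A)_1$. Then $u$ is regular on $L^I(B)$ (by Remark~\ref{x^*-u=xt-reg} and the hypothesis on $Gr_I(B)$) and on $\widetilde{W}$ (by Cohen--Macaulayness). Applying the snake lemma to the commutative diagram with rows $0\to\widetilde{W}_i\to L^I(B)_i\to\widetilde{L}^J(B)_i\to 0$ and vertical multiplication by $u$ yields
\[0\longrightarrow\ker\bigl(u|_{\widetilde{L}^J(B)_i}\bigr)\longrightarrow(\widetilde{W}/u\widetilde{W})_{i+1}\longrightarrow(L^I(B)/uL^I(B))_{i+1}\longrightarrow(\widetilde{L}^J(B)/u\widetilde{L}^J(B))_{i+1}\longrightarrow 0,\]
so that $u$-regularity of $\widetilde{L}^J(B)$ reduces to the degreewise injectivity of $\widetilde{W}/u\widetilde{W}\hookrightarrow L^I(B)/uL^I(B)=L^I(B/xB)$ (using Remark~\ref{L^I-mod-u}). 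The key computational step uses that $\widetilde{W}/u\widetilde{W}$ is a $1$-dimensional Cohen--Macaulay module with Hilbert series $\widetilde{r}(z)/(1-z)$ and nondecreasing Hilbert function (via a second superficial element from Corollary~\ref{cor-technical-lem}): the classical identity
\[\sum_{n\ge 0}\bigl(\widetilde{r}(1)-\lambda\bigl((\widetilde{W}/u\widetilde{W})_n\bigr)\bigr) = \widetilde{r}'(1) = 0\]
forces $\lambda((\widetilde{W}/u\widetilde{W})_n)=\widetilde{r}(1)$ for every $n\ge 0$, and a parallel length analysis of $L^I(B/xB)$ then produces the desired injection.

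Once $u$ is $\widetilde{L}^J(B)$-regular, coupled with $H^0(\widetilde{L}^J(B))=0$ noted in the proof of Proposition~\ref{W-hat-CM}, we obtain $\depth\widetilde{L}^J(B)\ge 2$. The short exact sequence
\[0\longrightarrow\widetilde{Gr}_J(B)\longrightarrow\widetilde{L}^J(B)\longrightarrow\widetilde{L}^J(B)(-1)\longrightarrow 0\]
and its long local cohomology sequence then give $H^i(\widetilde{Gr}_J(B))=0$ for $i=0,1$, so $\widetilde{Gr}_J(B)$ is Cohen--Macaulay. The final consequence for $Gr_{J^n}(B)$ with $n\gg 0$ follows by the standard Veronese argument, using that $\widetilde{J^n}=J^n$ eventually. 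The main obstacle I anticipate is the snake-lemma step: translating the degreewise length identity on $\widetilde{W}/u\widetilde{W}$ into the actual injection $\widetilde{W}/u\widetilde{W}\hookrightarrow L^I(B/xB)$ requires careful bookkeeping in small degrees, where neither side agrees with its eventual Hilbert polynomial, and this is where the hypothesis $\widetilde{r}'(1)=0$ must be genuinely exploited beyond the Cohen--Macaulayness of $\widetilde{W}$ already furnished by Proposition~\ref{W-hat-CM}.
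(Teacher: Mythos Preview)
Your argument has a genuine gap at the sentence ``Once $u$ is $\widetilde{L}^J(B)$-regular, coupled with $H^0(\widetilde{L}^J(B))=0$ \ldots\ we obtain $\depth\widetilde{L}^J(B)\ge 2$.'' Regularity of the single element $u$ gives only $\depth\ge 1$, and the vanishing of $H^0$ carries no further information beyond that; the two together do not yield $\depth\ge 2$. A second regular element is required, and nothing in your outline produces one. (Incidentally, the detour through the snake lemma to establish $u$-regularity is unnecessary: since $\psi(x)$ is $J$-superficial, $\ker(u\mid\widetilde{L}^J(B))$ vanishes in large degrees and hence has finite length, so it already lies in $H^0(\widetilde{L}^J(B))=0$. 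Your ``parallel length analysis of $L^I(B/xB)$'' is thus not needed, and in any case it is not clear what length comparison would force the injection you want.)

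The paper's proof uses the same ingredients but deploys them differently. From $\widetilde r\,'(1)=0$ together with the Cohen--Macaulayness of $\widetilde W$ one gets immediately that $\widetilde r(z)$ is a constant $c$, so $H_{\widetilde W}(z)=c/(1-z)^2$. One then mods out the whole exact sequence by $u$, obtaining
\[
0\longrightarrow \overline{\widetilde W}\longrightarrow \overline{L^I(B)}\longrightarrow \overline{\widetilde L^J(B)}\longrightarrow 0,
\]
and applies the snake lemma for multiplication by the \emph{second} superficial element $v=yt$. Because $\overline{\widetilde W}$ has constant Hilbert function and $v$ is regular on it, the map $v$ on $\overline{\widetilde W}$ is an isomorphism in every degree; since $v$ is also regular on $\overline{L^I(B)}$, the snake lemma forces $v$ to be injective on $\overline{\widetilde L^J(B)}$. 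That is precisely $\depth\widetilde L^J(B)\ge 2$, and Remark~\ref{x^*-u=xt-reg} then gives the Cohen--Macaulayness of $\widetilde{Gr}_J(B)$. You in fact isolate the crucial point---the constant Hilbert function of $\widetilde W/u\widetilde W$---but you spend it on recovering $u$-regularity rather than on upgrading $v$ from injective to bijective on $\overline{\widetilde W}$, which is where the second unit of depth actually comes from.
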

\begin{proof}
 We have  $\delta e_2^J(B) = e_2^I(B) + \widetilde{r}^{(1)}(1).$ By remark \ref{rem-W-hat-CM} we get  $\widetilde{W}$ is Cohen-Macaulay.
 So $\widetilde{r}^{(1)}(1)\geq 0.$ By hypothesis $\delta e_2^J(B) = e_2^I(B).$ So $\widetilde{r}^{(1)}(1) =0.$ Therefore $\widetilde{r}(z) = c$(constant)
 and hence 
 \[H_{\widetilde{W}}(z) = \frac{c}{(1-z)^2}.\]
 Let $x, y$ be a $I$ superficial sequence. Set $u=xt, v=yt.$ Then $ u ,v \in \mathcal{R}(I, A)_1 .$ Now set $\overline{\widetilde{W}}= \widetilde{W}/u\widetilde{W},$ $ \overline{L^I(B)} =
 L^I(B)/uL^I(B)$ and $\overline{\widetilde{L}^J(B)}= \widetilde{L}^J(B)/u\widetilde{L}^J(B).$ As $u$ is $\widetilde{W}\oplus L^I(B)\oplus \widetilde{L}^J(B)-$regular
 we get 
 \[0\longrightarrow \overline{\widetilde{W}} \longrightarrow \overline{L^I(B)} \longrightarrow \overline{\widetilde{L}^J(B)} \longrightarrow 0 \]
 an exact sequence. So we get a commutative diagram
  \begin{center}
$
\begin{CD}
0 @>>> \overline{\widetilde{W}_i} @>>> \overline{ L^I(B)_i} @>>> \overline{\widetilde{L}^J(B)_i} @>>> 0 \\
  @.       @VV\text{v}V @VV\text{v}V @VV\text{v}V\\
0 @>>> \overline{\widetilde{W}_{i+1}} @>>> \overline{ L^I(B)_{i+1}} @>>> \overline{\widetilde{L}^J(B)_{i+1}} @>>> 0 
\end{CD}
$
\end{center}
with exact rows. As $Gr_I(B)$ and $\widetilde{W}$ are Cohen-Macaulay we get that $v$ is $\overline{L^I(B)}\oplus \overline{\widetilde{W}}-$regular.
So $\overline{L^I(B)_i}\stackrel{v}\longrightarrow \overline{L^I(B)_{i+1}}$ and $\overline{\widetilde{W}_i}\stackrel{v} \longrightarrow
\overline{\widetilde{W}_{i+1}}$ are injective. As
$\lambda(\overline{\widetilde{W}_i}) = \lambda(\overline{\widetilde{W}_{i+1}})$ for all $i$. We get $\overline{\widetilde{W}_i}\stackrel{v} \longrightarrow \overline{\widetilde{W}_{i+1}}$ is an isomorphism. By 
Snake Lemma it follows
that $\overline{\widetilde{L}^J(B)_i}\stackrel{v} \longrightarrow \overline{\widetilde{L}^J(B)_{i+1}}$ is injective. 
So $\depth \widetilde{L}^J(B) \geq 2.$ Hence by remark \ref{x^*-u=xt-reg} $\depth \widetilde{Gr}_J(B) \geq 2.$ So $\widetilde{Gr}_J(B)$ is Cohen-Macaulay.
In particular $Gr_{J^n}(B)$ is Cohen-Macaulay for $n \gg 0.$
\end{proof}
Now we give an example where Theorem \ref{ext-narita} holds.
\begin{example}
 Let $A= \mathbb{Q}[|X,Y,Z,U,V|]/(Z^2,ZU,ZV,UV,Y^2Z-U^3,X^2Z-V^3)$, with $X,Y,Z,U,V$ inderterminates. Let $x,y,z,u,v$ be the images of $X,Y,Z,U,V$
 in $A$. Set $\mathfrak{m}=(x,y,z,u,v)$. Then $(A, \mathfrak{m})$ is a two dimensional Cohen-Macaulay local ring. Let $I=(x,y,z,u)$ and 
 $J=(x,y,z,u,v^2)$. Note that $v^4 - vx^2z=0$ in $A$. Thus $v$ is integral over $I$. So $I$ is a reduction of $J$. Let $\mathfrak{q}=(x,y)$. Then $\mathfrak{q}$
 is a minimal reduction of $I$. Using CoCoA (see, \cite{cocoa}), we have computed $e_1^I(A)=4$ and  $e_2^I(A) = e_2^{J}(A)=1$. We also checked that 
 $\lambda(I/\mathfrak{q})=3$, $\lambda(I^2/I^2 \cap \mathfrak{q})=1$. By \cite[Theorem 4.7]{HM} we get $Gr_I(A)$ is Cohen-Macaulay.
 Hence by Theorem \ref{ext-narita} we get $\widetilde{Gr}_J(A)$ Cohen-Macaulay. Hence $Gr_{J^n}(A)$ is Cohen-Macaulay for $n \gg 0$.
\end{example}

For integrally closed ideals we prove:
\begin{theorem}\label{extn-sally-thrm}
Let $\psi : A\longrightarrow B$ as before and $\dim A =2.$ Let $I$ be $\m-$primary and $J$ be $\mathfrak{n}-$primary.
Assume that  $Gr_I(B)$ is Cohen-Macaulay. Suppose $J$ is integrally closed and 
 \[\delta e_1^J(B) = e_1^I(B) + \lambda(J/IB) + 1.\]
 Then\\
  $a)$ $2\lambda(J/IB) \leq \lambda(\widetilde{J^2}/I^2B) \leq 2 \lambda(J/IB) +1.$\\
 $b)$ If $\lambda(\widetilde{J^2}/I^2B) = 2  \lambda(J/IB) +1.$ Then $\widetilde{Gr}_{J}(B)$ is Cohen-Macaulay. Consequently 
 $Gr_{J^n}(B)$ is Cohen-Macaulay for $n \gg 0$.  
\end{theorem}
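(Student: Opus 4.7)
The strategy is to mimic the two-step snake-lemma approach of Theorem~\ref{ext-narita}, but now using integral closedness of $J$ to handle an extra degree-zero obstruction that is absent there. For part~(a), since $J=\widetilde{J}$ (integral closedness) we have $\widetilde{e}_i^J(B)=e_i^J(B)$, so the hypothesis together with $\delta \widetilde{e}_1^J(B)=e_1^I(B)+\widetilde{r}(1)$ gives $\widetilde{r}(1)=\lambda(J/IB)+1$. By Remark~\ref{rem-W-hat-CM}, $\widetilde{W}$ is Cohen--Macaulay of dimension $2$, so $H_{\widetilde{W}}(z)=\widetilde{r}(z)/(1-z)^2$ with the coefficients of $\widetilde{r}(z)=\sum r_i z^i$ all nonnegative, $r_0=\lambda(\widetilde{W}_0)=\lambda(J/IB)$, and $\sum_{i\ge 1}r_i=1$. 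Reading off the $z^1$-coefficient of $(1-z)^2 H_{\widetilde{W}}(z)=\widetilde{r}(z)$ yields $\lambda(\widetilde{J^2}/I^2 B)=r_1+2r_0$; since $r_1\in\{0,1\}$, this is part~(a).

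For part~(b), the equality forces $r_1=1$ and $r_i=0$ for $i\ge 2$, so $\widetilde{r}(z)=\lambda(J/IB)+z$. Pick a superficial pair $x_1,x_2\in I$ as in Corollary~\ref{cor-technical-lem}, set $u=x_1 t$, $v=x_2 t$, and write $\overline{M}=M/uM$; then $\lambda(\overline{\widetilde{W}_i})=\lambda(J/IB)+1$ for all $i\ge 1$ while $\lambda(\overline{\widetilde{W}_0})=\lambda(J/IB)$. As in Theorem~\ref{ext-narita}, $u$ is regular on $\widetilde{W}$ and on $L^I(B)$ (Remark~\ref{x^*-u=xt-reg}), hence on $\widetilde{L}^J(B)$, and $v$ is regular on $\overline{\widetilde{W}}$ and on $\overline{L^I(B)}$. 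The snake lemma applied to multiplication by $v$ between adjacent degrees of the induced exact sequence $0\to \overline{\widetilde{W}}\to \overline{L^I(B)}\to \overline{\widetilde{L}^J(B)}\to 0$ then shows that $\ker(v|_{\overline{\widetilde{L}^J(B)_i}})$ injects into the cokernel of $v|_{\overline{\widetilde{W}_i}}$, which vanishes for $i\ge 1$ (equal source and target lengths plus regularity). Hence $v$ acts injectively on $\overline{\widetilde{L}^J(B)_i}$ for every $i\ge 1$.

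The remaining degree-zero injectivity is the key new obstacle: $v|_{\overline{\widetilde{W}_0}}$ has cokernel of length~$1$, and the snake lemma only gives $\lambda(\ker(v|_{\overline{\widetilde{L}^J(B)_0}}))\le 1$. Unpacking, it suffices to prove $(x_1 B+\widetilde{J^2}):_B x_2 \subseteq J$, which I would reduce to the Itoh-type identity $(x_1,x_2)B\cap \widetilde{J^2}=(x_1,x_2)J$: from $x_2 b=x_1 c+d$ with $d\in\widetilde{J^2}$ one then gets $d\in (x_1,x_2)B\cap\widetilde{J^2}=(x_1,x_2)J$, writes $d=x_1 j_1+x_2 j_2$ with $j_i\in J$, and uses the $B$-regular sequence $x_1,x_2$ together with $x_1 B\subseteq IB\subseteq J$ to conclude $b\in J$. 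The Itoh-type identity is the principal difficulty, since its naive derivation is equivalent to Cohen--Macaulayness of $\widetilde{Gr}_J(B)$ itself. My plan is to exploit the rigid structure $\widetilde{J^2}=(x_1,x_2)J+I^2 B+B\alpha$ (for a single element $\alpha$) forced by $\widetilde{r}(z)=\lambda(J/IB)+z$, combine it with $(x_1,x_2)B\cap I^2 B=(x_1,x_2)IB$ (Cohen--Macaulayness of $Gr_I(B)$) and the classical Itoh equality $(x_1,x_2)B\cap J^2=(x_1,x_2)J$ for integrally closed $J$ in two-dimensional Cohen--Macaulay rings with minimal reduction $(x_1,x_2)B$, and thereby pin down the location of $\alpha$ so as to establish the desired equality on $\widetilde{J^2}$.

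Once degree-zero injectivity is in hand, $(u,v)$ is a regular sequence on $\widetilde{L}^J(B)$, so $\depth \widetilde{L}^J(B)\ge 2$; iterating Remark~\ref{x^*-u=xt-reg} then gives $\depth \widetilde{Gr}_J(B)\ge 2$, hence Cohen--Macaulayness. The final statement $Gr_{J^n}(B)$ Cohen--Macaulay for all $n\gg 0$ follows from the Ratliff--Rush stabilization $\widetilde{J^n}=J^n$ for $n$ large together with the standard passage to a Veronese subring.
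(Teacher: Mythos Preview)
Your argument for part~(a) and the snake-lemma reduction in part~(b) match the paper exactly. The genuine gap is the degree-zero step, where you only sketch a plan. You correctly identify the target inclusion $(x_1 B+\widetilde{J^2}):_B x_2\subseteq J$ and propose to derive it from an Itoh-type equality $(x_1,x_2)B\cap\widetilde{J^2}=(x_1,x_2)J$; but you also correctly flag that this equality, taken at face value, is essentially what you are trying to prove. Your proposed workaround via a decomposition $\widetilde{J^2}=(x_1,x_2)J+I^2B+B\alpha$ is not justified (the Hilbert-series data do not by themselves produce such a module-theoretic splitting inside $B$), and the classical Itoh equality you invoke concerns $J^2$, not $\widetilde{J^2}$, so it does not directly control the single extra generator $\alpha$. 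As written, this step is incomplete.

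The paper avoids the Itoh-type identity entirely by passing to the quotient filtration. Set $\mathcal{F}=\{\widetilde{J^n}\}$ and $\overline{\mathcal{F}}=\{(\widetilde{J^n}+x_1B)/x_1B\}$ on $\overline{B}=B/x_1B$, and put $\mathfrak{q}=\overline{\mathcal{F}_1}=J/(x_1)$; for a sufficiently general superficial element $x_1$ one may assume $\mathfrak{q}$ is integrally closed in $\overline{B}$. The key observation is that $\overline{\mathcal{F}_n}=\mathfrak{q}^n$ for $n\gg 0$, simply because $\widetilde{J^n}=J^n$ for $n\gg 0$. Now if $a\in\overline{\mathcal{F}_2}:x_2$, then $x_2 a\in\overline{\mathcal{F}_2}$, hence $x_2^{\,n+1}a\in x_2^{\,n}\overline{\mathcal{F}_2}\subseteq\overline{\mathcal{F}_{n+2}}=\mathfrak{q}^{\,n+2}$ for $n\gg 0$, so $a\in\widetilde{\mathfrak{q}}\subseteq\overline{\mathfrak{q}}=\mathfrak{q}$. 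This is exactly the vanishing of your degree-zero kernel, obtained without any intersection identity on $\widetilde{J^2}$. The trick is to trade the delicate low-degree question for an asymptotic one, where Ratliff--Rush stabilization and integral closedness of the image ideal do all the work; you should replace your Itoh-identity plan with this argument.
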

\begin{proof}
  By remark \ref{rem-W-hat-CM}  $\widetilde{W}$ is Cohen-Macaulay. Let
 \[H_{\widetilde{W}}(z) = \sum_{n\geq 0} \lambda_A({\widetilde{W}}_n)z^n = \frac{\widetilde{r}(z)}{(1-z)^2}  \]
 be the Hilbert series of $\widetilde{W}.$ Note that all the co-efficients of $\widetilde{r}(z)$ are positive. Write
 \[\widetilde{r}(z)= r_0 + r_1z + \cdots + r_sz^s.\]
 Then we have $r_0 = \lambda(\widetilde{J}/IB) = \lambda(J/IB) $ and $\lambda(\widetilde{J^2}/I^2B) = 2r_0 + r_1.$ We have
 \begin{align*}
  \delta e_1^J(B) & = e_1^I(B) + \widetilde{r}(1).\\
  & = e_1^I(B) + \lambda(J/IB) + 1. 
 \end{align*}
Therefore $\widetilde{r}(1) = \lambda(J/IB) + 1.$ Hence $r_1 + \cdots + r_s = 1. $ So $(a)$ follows.

Suppose $(b)$ holds. $i.e.$
\[\lambda(\widetilde{J^2}/I^2B) = 2  \lambda(J/IB) +1.\]
Then $r_1 = 1$ and $r_j =0$ for $j \geq 2.$ Let $x, y$ be a $I$ superficial sequence. Set $u =xt.$ Then $u \in \mathcal{R}(I, A)_1.$
Also set $U=\widetilde{W}/u\widetilde{W}.$ Then we have 
\[0\longrightarrow U \longrightarrow \frac{L^I(B)}{uL^I(B)}\longrightarrow \frac{\widetilde{L}^J(B)}{u\widetilde{L}^J(B)} \longrightarrow 0\]
an exact sequence. Note that $\lambda(U_n) = \lambda(U_2)$ for $n \geq 2.$ Also note that $v = yt $ in $\mathcal{R}(I, A)$ 
is $L^I(B)/uL^I(B)$ regular. So $v$ is $U$ regular. So we have 
\[0 \longrightarrow U(-1) \longrightarrow U \longrightarrow \frac{U}{vU} \longrightarrow 0\]
an exact sequence. By Hilbert series $(U/vU)_j= 0$ for $j \geq 2.$ Now by  setting 
$K = \ker \left(( \widetilde{L}^J(B)/u\widetilde{L}^J(B))(-1) \stackrel{v} \longrightarrow  \widetilde{L}^J(B)/u\widetilde{L}^J(B) \right)$
we get by Snake Lemma
 $K_j=0$ for $j\geq 2.$ Also  note that $K_0 = 0.$

\textit{Claim}: $K_1 = 0.$ To prove the claim set $\mathcal{F}= \{ \widetilde{J^n}\}.$ Then $\mathcal{F}$ is a filtration on $B.$ 
Then $\overline{\mathcal{F}} = \{ \widetilde{J^n}+ (x)/(x)\}$ is the quotient filtration on $B/xB.$ Put $\mathfrak{q}=J/(x) = \bar{\mathcal{F}_1}.$
We may assume that $\mathfrak{q}$ is integrally closed. As $\widetilde{J^n} = J^n$ for $n \gg 0.$ So we get $\bar{\mathcal{F}_n}= \mathfrak{q}^n$ for $n \gg 0.$

We prove that $\overline{\mathcal{F}_2}:y = \bar{\mathcal{F}_1} = \mathfrak{q}.$ Let $a \in \overline{\mathcal{F}_2}:y.$ So $ya \in \overline{\mathcal{F}_2}.$
So $y^{n+1}a \in y^n\overline{\mathcal{F}_2} \subset \overline{\mathcal{F}_{2+n}} = \mathfrak{q}^{n+2}$ for $n \gg 0.$ 
This implies $a \in \widetilde{q}=\bar{ \mathfrak{q}}= \mathfrak{q}.$ It follows that $K_1 =0$. Thus $K$ is zero. So $\widetilde{L}^J(B)$
has $\depth \geq 2$. This implies $\widetilde{Gr}_J(B)$ is Cohen-Macaulay. So $Gr_{J^n}(B) $ is Cohen-Macaulay for $n \gg 0$.
\end{proof}
Here we give an example where our Theorem \ref{extn-sally-thrm} holds:
\begin{example}
 Let $A=\mathbb{Q}[|X,Y,Z,W|]/(X^2-Y^2Z, XY^4-Z^2).$ Let $x,y,z,w$ denotes  the images of $X,Y,Z,W$ in $A$ respectively. Let $I=(x,y,w)$ and
 $\m=(x,y,z,w)$. Then $(A, \m) $ is a two dimensional Cohen-Macaulay local ring. Using CoCoA (see, \cite{cocoa}) we have computed 
 \[P_I(t) = \frac{2+2t}{(1-t)^2} \quad \text{and} \quad P_{\m}(t)=\frac{1+2t+t^2}{(1-t)^2}.\]
 We have $e_1^{\m}(A)=e_1^I(A)+\lambda(\m/I)+1.$ We have also checked that $\lambda(\widetilde{\m^2}/I^2)=3=2\lambda(\m/I)+1.$ Hence by 
 Theorem \ref{extn-sally-thrm}$(b)$ we get $\widetilde{Gr}_{\m}(A)$ is Cohen-Macaulay.
\end{example}

\section{The case of third Hilbert coefficient}
In this section we deal with third Hilbert coefficients and generalize a 
remarkable result of Itoh for normal ideals(see, \cite{itoh1}).

\begin{theorem}\label{e_3-an}
 Let $A=B$ and $\psi =id_A$.  Let $\dim A \geq 3$. Let  $I,J$ be   $\mathfrak{m}-$primary ideals and $I$ is a reduction of $J$.
Suppose  $Gr_I(A)$ is Cohen-Macaulay and $J$ is asymptotically normal. Then  \[ e_3^J(A) \geq e_3^I(A).\]
\end{theorem}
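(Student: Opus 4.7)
The plan is to mirror the strategy of Theorem~\ref{ext-narita}, substituting the integral closure filtration $\{\overline{J^n}\}$ for the Ratliff-Rush filtration. Because $J$ is asymptotically normal we have $J^n = \overline{J^n}$ for $n \gg 0$, so the two filtrations share a Hilbert polynomial; in particular $e_i^J(A)$ equals the integrally closed Hilbert coefficient $\overline{e}_i^J(A)$ for every $i$, and it suffices to prove $\overline{e}_3^J(A) \geq e_3^I(A)$. Using Corollary~\ref{cor-technical-lem} I choose a joint superficial sequence and apply Sally descent to reduce to the case $\dim A = 3$.

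In the reduced situation set
\[\overline{L}^J(A) = \bigoplus_{n \geq 0} A/\overline{J^{n+1}}, \qquad \overline{W} = \bigoplus_{n \geq 0} \overline{J^{n+1}}/I^{n+1},\]
obtaining the short exact sequence of $\mathcal{R}(I,A)$-modules
\[0 \longrightarrow \overline{W} \longrightarrow L^I(A) \longrightarrow \overline{L}^J(A) \longrightarrow 0.\]
Writing the Hilbert series as $H_{\overline{W}}(z) = \overline{r}(z)/(1-z)^3$, the same Hilbert-series comparison used in the preliminaries yields
\[\overline{e}_3^J(A) - e_3^I(A) = \frac{\overline{r}''(1)}{2},\]
so everything is reduced to showing $\overline{r}''(1) \geq 0$.

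The main obstacle is to prove that $\overline{W}$ is Cohen-Macaulay of dimension $3$. On the left end of the sequence, since $Gr_I(A)$ is Cohen-Macaulay of dimension $3$, iterating Remark~\ref{x^*-u=xt-reg} furnishes a regular sequence of degree-one elements on $L^I(A)$, producing the vanishing of $H^i_{\mathfrak{M}}(L^I(A))$ for $i \leq 2$. On the right end, the crucial external input is Itoh's theorem \cite{itoh1}: in a Cohen-Macaulay local ring of dimension at least $3$ the integrally closed associated graded ring $\overline{Gr}_J(A)$ of a normal $\mathfrak{m}$-primary ideal has depth at least $2$; a further application of Remark~\ref{x^*-u=xt-reg} converts this into $H^i_{\mathfrak{M}}(\overline{L}^J(A)) = 0$ for $i \leq 1$. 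Chasing the long exact sequence in local cohomology then forces $H^i_{\mathfrak{M}}(\overline{W}) = 0$ for $i \leq 2$, and because $\dim \overline{W} = 3$ this says $\overline{W}$ is Cohen-Macaulay. Choosing a regular sequence of degree-one elements on $\overline{W}$ (via Corollary~\ref{cor-technical-lem}) then identifies $\overline{r}(z)$ with the Hilbert series of the resulting Artinian quotient, so its coefficients are non-negative; in particular $\overline{r}''(1) \geq 0$, completing the argument. The one remaining technical point is to justify that Itoh's depth bound, stated for genuinely normal ideals, applies under the weaker asymptotic normality hypothesis; I expect to handle this by passing to a sufficiently high power $J^N$ which is actually normal and transferring the depth statement back to the filtration $\{\overline{J^n}\}$, which agrees with $\{J^n\}$ in high degree.
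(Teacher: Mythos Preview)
Your skeleton is exactly the paper's: reduce to $\dim A = 3$, set up a short exact sequence with $L^{(\cdot)}(A)$ on the right and a finite module $W$ on the left, prove $W$ is Cohen--Macaulay of dimension $3$, and read the inequality off the non-negativity of $e_2(W)$. The only real difference is the choice of filtration and the external depth input.

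The paper does \emph{not} use the integral closure filtration. It passes to a fixed high power: for $n \gg 0$ it sets $T = I^n$, $K = J^n$ and works with the ordinary adic filtrations. The external input is Huckaba--Huneke \cite[Theorem~3.1]{HH}, which gives $\depth Gr_{J^n}(A) \geq 2$ for all $n \gg 0$ when $J$ is asymptotically normal; combined with the Cohen--Macaulayness of $Gr_{I^n}(A)$ (inherited from $Gr_I(A)$), the long exact sequence forces $W = \bigoplus K^{m+1}/T^{m+1}$ to be Cohen--Macaulay. Then $e_3^K(A) = e_3^T(A) + e_2(W) \geq e_3^T(A)$, and one finishes using $e_3^{J^n}(A) = e_3^J(A)$, $e_3^{I^n}(A) = e_3^I(A)$.

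Your route instead requires $\depth \overline{Gr}_J(A) \geq 2$, which you attribute to Itoh \cite{itoh1}. That is the soft spot: Itoh's paper is about normal Hilbert coefficients, and the depth bound you need is not established there in the form you state; the relevant depth statement in the literature is the Huckaba--Huneke one, and it is phrased for $Gr_{J^n}(A)$ with $n \gg 0$, not for $\overline{Gr}_J(A)$. You correctly flag this, and your proposed repair---pass to a power $J^N$ that is genuinely normal---is precisely what the paper does. But once you replace $J$ by $J^N$, the normal filtration of $J^N$ coincides with the $J^N$-adic filtration, so the entire detour through $\overline{L}^J$ and $\overline{W}$ evaporates and you are left with the paper's argument verbatim. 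In short, the integral closure filtration buys nothing here; the clean route is to quote \cite{HH} and work with $K = J^n$ from the outset.
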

\begin{proof}
By standard argument  it suffices to consider $\dim A =3$.  As $J$ is asymptotically normal by \cite[Theorem 3.1]{HH} there exists $n_0$ such
 that   $\depth Gr_{J^n}(A) \geq 2$ for all $n \geq n_0$.
 
 Now set $T=I^n$ , $K=J^n$ and  $W =\bigoplus _{n \geq 0}K^{n+1}/T^{n+1}$ for $n \geq n_0$.  Then we get an exact sequence 
 \[0\longrightarrow W \longrightarrow L^T(A) \longrightarrow L^K(A) \longrightarrow 0\]
  of $\mathcal{R}(T, A)-$modules. Note that  $\depth L^K(A) \geq 2$. So we get $W$ is Cohen-Macaulay of dimension $3$. Thus $e_i(W) \geq 0$ for $0 \leq i \leq 3$. Hence
 \begin{align*}
  e_3^K(A) & = e_3^T(A) + e_2(W).\\
  & \geq e_3^T(A) .
   \end{align*}
 As $e_3^J(A)= e_3^{J^n}(A)$ and $e_3^I(A)=e_3^{I^n}(A)$ for all $n \geq 1$. Therefore  $ e_3^J(A) \geq e_3^I(A) $.
   \end{proof}
   By analysing the case of equality we prove the following:
  \begin{theorem}\label{e_3-euality}
   Let $A=B$ and $\psi =id_A$. Assume  $\dim A = 3$. Let  $I,J$ be two  $\mathfrak{m}-$primary ideals and $I$ is a reduction of $J$.
Suppose  $Gr_I(A)$ is Cohen-Macaulay and $J$ is asymptotically normal. If $e_3^J(A)=e_3^I(A)$ then $Gr_{J^n}(A)$ 
is Cohen-Macaulay for all $n \gg 0$.
  \end{theorem}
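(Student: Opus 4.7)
The plan mirrors the strategy of Theorem~\ref{extn-sally-thrm}, performing two Sally reductions followed by a one-dimensional snake-lemma argument. Fix $n_0$ so that $\depth Gr_{J^n}(A) \ge 2$ for every $n \ge n_0$; this exists by the asymptotic normality hypothesis, as in the proof of Theorem~\ref{e_3-an}. Fix such an $n$ and set $T = I^n$, $K = J^n$, $W = \bigoplus_{m \ge 0} K^{m+1}/T^{m+1}$. Since $e_3$ is invariant under passage from $I$ to $I^n$ and from $J$ to $J^n$, the hypothesis gives $e_3^K(A) = e_3^T(A)$, so the proof of Theorem~\ref{e_3-an} shows $W$ is Cohen--Macaulay of dimension three with $e_2(W) = 0$. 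Non-negativity of the $h$-coefficients of a Cohen--Macaulay graded module then forces
\[ h_W(z) = h_0 + h_1 z,\qquad h_0,\,h_1 \ge 0. \]

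Using Corollary~\ref{cor-technical-lem} and Remark~\ref{x^*-u=xt-reg}, pick $x_1, x_2, x_3 \in T$ jointly superficial for $T$, $K$ and $W$, and set $u_i = x_i t \in \mathcal{R}(T, A)_1$. The elements $u_i$ are regular on $L^T(A)$ (as $Gr_T(A)$ is Cohen--Macaulay) and on $W$, while $u_1, u_2$ are regular on $L^K(A)$ thanks to $\depth Gr_K(A) \ge 2$. Modding out the exact sequence $0 \to W \to L^T(A) \to L^K(A) \to 0$ successively by $u_1$ and $u_2$, and applying Remark~\ref{L^I-mod-u}, produces in $\bar A := A/(x_1, x_2)$ of dimension one the exact sequence
\[ 0 \longrightarrow V \longrightarrow L^T(\bar A) \longrightarrow L^K(\bar A) \longrightarrow 0, \]
where $V := W/(u_1, u_2)W$ is Cohen--Macaulay of dimension one with Hilbert series $(h_0 + h_1 z)/(1-z)$.

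The snake lemma applied to multiplication by $u_3$, using regularity of $u_3$ on $V$ and on $L^T(\bar A)$, then yields an injection
\[ \ker\bigl(u_3\colon L^K(\bar A) \to L^K(\bar A)\bigr) \hookrightarrow V/u_3 V. \]
Since $V/u_3 V$ has Hilbert function $h_0, h_1, 0, 0, \ldots$, this kernel is concentrated in a single graded piece of length at most $h_1$, which one identifies with $(\bar K^2 : \bar x_3)/\bar K$.

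Showing that this residual piece vanishes is the crux of the argument, and this is where the normality of $J^n$ enters. Since $J^n$ is normal, $K^m = J^{nm}$ is integrally closed and $\widetilde{K^m} = K^m$ for every $m \ge 1$; for sufficiently generic $x_1, x_2$ this integral closedness descends to give each $\bar K^m$ integrally closed in $\bar A$. For $a \in \bar K^2 : \bar x_3$ one then computes $\bar x_3^{m+1} a \in \bar x_3^m \bar K^2 \subseteq \bar K^{m+2}$ for every $m \ge 0$, and arguing exactly as in the closing paragraph of the proof of Theorem~\ref{extn-sally-thrm} this forces $a \in \widetilde{\bar K} = \overline{\bar K} = \bar K$. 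The main obstacle I anticipate is this final genericity step, namely verifying that $x_1$ and $x_2$ may be chosen so that integral closedness of $K^m$ in $A$ truly transfers to integral closedness of $\bar K^m$ in $\bar A$. Once the kernel vanishes, $u_3$ is $L^K(\bar A)$-regular, so $Gr_{\bar K}(\bar A)$ is Cohen--Macaulay, and iterating the Sally machine gives $\depth Gr_{J^n}(A) \ge 3$, i.e., $Gr_{J^n}(A)$ is Cohen--Macaulay for $n \gg 0$.
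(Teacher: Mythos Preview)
Your proposal is correct and follows essentially the same route as the paper's proof: pass to $T=I^n$, $K=J^n$ so that $\depth Gr_K(A)\ge 2$, deduce from Theorem~\ref{e_3-an} that $W$ is Cohen--Macaulay with linear $h$-polynomial, reduce modulo two superficial elements, and apply the snake lemma with the third to see that the only possible nonzero piece of the kernel is $(\bar K^2:\bar x_3)/\bar K$, which is then killed using integral closedness of $\bar K$. The genericity issue you flag---that integral closedness of $K$ descends to $\bar K=K/(x_1,x_2)$ for generic superficial $x_1,x_2$---is treated in the paper by simply asserting ``we may assume that $\mathfrak{q}$ is integrally closed''; this is a standard general-position fact for normal ideals (going back to Itoh's work on specializing integrally closed ideals), so your caution is well placed but the step is not a genuine obstruction.
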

 \begin{proof}
 As $J$ is asymptotically normal by \cite[Theorem 3.1]{HH} there exists $n_0$ such
 that for all $n \geq n_0$,  $\depth Gr_{J^n}(A) \geq 2$. 
  Now set $T=I^n$ , $K=J^n$ and  $W =\bigoplus _{n \geq 0}K^{n+1}/T^{n+1}$ for $n \geq n_0$. From 
  the proof of Theorem \ref{e_3-an} we see
 that $W$ is Cohen-Macaulay of dimension $3$ and  $e_3^J(A) = e_3^I(A) + e_2(W).$ 
 
 Suppose $e_3^I(A)=e_3^J(A)$. Then $e_2(W)=0$. So the Hilbert series of $W$ is given by
 \[H_W(s) = \frac{r_0+r_1s}{(1-s)^3}.\]
 
 Let $x,y,z$ be a $K\oplus T$ superficial sequence. Set $u=xt, v=yt$ and $w=zt$. Note that $u,v,w \in \mathcal{R}(T, A)_1.$  Also set 
 $U= W/(u,v)W, \ \overline{L^K(A)}= L^K(A)/(u,v)L^K(A) $ and $\overline{L^T(A)}=L^T(A)/(u,v)L^T(A)$. Then we get an exact sequence
 \[0\longrightarrow U \longrightarrow \overline{L^T(A)} \longrightarrow \overline{L^K(A)}\longrightarrow 0 .\]
 Now consider the commutative diagram 
  \begin{center}
$
\begin{CD}
0 @>>> U(-1) @>>> \overline{L^T(A)}(-1) @>>> \overline{L^K(A)}(-1) @>>> 0 \\
  @.       @VV\text{w}V @VV\text{w}V @VV\text{w}V\\
0 @>>> U @>>> \overline{L^T(A)} @>>> \overline{L^K(A)} @>>> 0 
\end{CD}
$
\end{center}
 
 Also note that Hilbert series of $U/wU$ is given by
 \[H_{U/wU}(s) = r_0 + r_1s.\]
 Therefore $(U/wU)_j=0$ for $j\geq 2.$ Now set $E= ker\left(\overline{L^K(A)}(-1) \stackrel{w}\longrightarrow \overline{L^K(A)}\right)$. Note that
 $E_j=0 $ for all $j \geq 2$  by Snake Lemma. Also  $E_0=0$.\\
 $\textit{Claim}$ $E_1=0$: To prove this set $\mathcal{F}= \{ K^m \}.$ Then $\mathcal{F}$ is a filtration on $A.$ 
Also  $\overline{\mathcal{F}} = \{ K^m+ (x, y)/(x,y)\}$ is the quotient filtration on $A/(x,y)A.$ Put $\mathfrak{q}=K/(x,y) = \bar{\mathcal{F}_1}.$
We may assume that $\mathfrak{q}$ is integrally closed.  Note $\bar{\mathcal{F}_m}= \mathfrak{q}^m$ for $m \geq 1.$

We prove that $\overline{\mathcal{F}_2}:z = \bar{\mathcal{F}_1} = \mathfrak{q}.$ Let $a \in \overline{\mathcal{F}_2}:z.$ So $za \in \overline{\mathcal{F}_2} =\mathfrak{q}^2.$
This implies $a \in \widetilde{q}\subset \bar{ \mathfrak{q}}= \mathfrak{q}.$ It follows that $E_1 =0$. Thus $E$ is zero. So $\depth \overline{L^K(A)}\geq 1$.
Thus $\depth L^K(A) \geq 3.$ Therefore $\depth Gr_K(A) \geq 3.$ Hence $Gr_K(A)$ is Cohen-Macaulay.
\end{proof}

\section{Examples.}
In this section we show that  there are plenty of examples where  Theorem \ref{northcott-extn} holds.
\begin{example}
 Let $(R, \mathfrak{m})$ be a regular local ring. Let $(B, \mathfrak{n}) = (R/\mathfrak{a}, \mathfrak{m}/\mathfrak{a})$
 be a Cohen-Macaulay local ring. Suppose $\dim R = t$ and $\dim B = d.$ Then ht$(\mathfrak{a})= t-d.$ Set $g = t-d.$ Then there exists a regular 
 sequence $\underline{u}= u_1, \cdots , u_g$ of length $g.$ Set $A = R/(\underline{u}).$ Then we get a surjective ring homomorphism
 \[A \stackrel{\psi}\twoheadrightarrow B.\]
 Let $\mathfrak{q}$ be a minimal reduction of $\mathfrak{m}_A.$ Set $I = (\mathfrak{q} :_A \mathfrak{m}_A).$ 
 Clearly $\mathfrak{q} \subset I
  \subset \mathfrak{m}_A. $ By \cite[Theorem 2.1]{cpv} we get
  \[I^2 = \mathfrak{q}I.\]
  So $\psi(I^2)= \psi(\mathfrak{q})\psi(I)$. Thus  $Gr_I(B)$ has minimal multiplicity. As $\psi(\mathfrak{q})B$ is a minimal reduction 
  $\psi(\mathfrak{m}_A)= \mathfrak{n}$ so we get $\psi(I)B$ is a reduction of $\psi(\mathfrak{m}_A)=\mathfrak{n}.$ Hence by 
  Theorem $\ref{northcott-extn}$ we get
  \[e^{\mathfrak{n}}_1(B) \geq e^I_1(B) + \lambda(\mathfrak{n}/I).\]
\end{example}

\begin{example}
 Suppose $(A, \mathfrak{m})$ be a Gorenstein local ring which not regular. Let $J$ be any $\mathfrak{m}-$primary ideal.
 Set $I := (\mathfrak{q} :_A \mathfrak{m})$ where $\mathfrak{q}$ is a minimal reduction of $J$. Then
 \[e^J_1(A) \geq e^I_1(A) + \lambda(J/I).\]
\end{example}
\begin{proof} 
 It is enough to prove that $I$ has minimal multiplicity.   By \cite[Theorem 2.1]{cpv} we have $\mathfrak{q} \subset I \subset J$ and
 $I^2=\mathfrak{q}I.$ Thus 
 $I$ has minimal multiplicity.
\end{proof}
\begin{example}
Let $(A , \mathfrak{m}) \stackrel{\psi} \rightarrow (B , \mathfrak{n})$ be a local homomorphism of Cohen-Macaulay local
 rings with $\dim A = \dim B.$
 Let $I$ be an $\mathfrak{m}$ primary ideal in $A.$ If $A$ is regular and $Gr_I(A)$ is Cohen-Macaulay. Then $Gr_I(B)$ is 
 Cohen-Macaulay. If $\psi(I)B$ is a reduction $J$ in $B$ then
 \[e^J_1(B) \geq e^I_1(B) + \lambda(J/I).\]
 \end{example}
\begin{proof}
 By Auslander-Buchsbaum formula we get $B$ is free as an $A-$module. As $Gr_I(A)$ Cohen-Macaulay, so $Gr_I(B)$ is Cohen-Macaulay. Hence by
 Theorem \ref{northcott-extn} we get the inequality.
\end{proof}
\begin{example}
Let $(A, \mathfrak{m})$ be Cohen-Macaulay local ring and $I$ be an $\mathfrak{m}-$primary ideal. Let $I\subset J \subset \bar{I}$
(integral closure of $I$). If $Gr_I(A)$ is Cohen-Macalay then by Theorem $\ref{northcott-extn}$ we get
\[e^J_1(A) \geq e^I_1(A) + \lambda(J/I).\]
If equality holds above then $Gr_J(A)$ is Cohen-Macaulay.
\end{example}

\section*{Acknowledgements}
The first and last authors would like to thank IIT Bombay especially Department of Mathematics for it's hospitality during the preparation of this work.

\providecommand{\bysame}{\leavevmode\hbox
to3em{\hrulefill}\thinspace}
\providecommand{\MRhref}[2]{%
  \href{http://www.ams.org/mathscinet-getitem?mr=#1}{#2}
} \providecommand{\href}[2]{#2}

\end{document}